\DeclareSymbolFont{cyrletters}{OT2}{wncyr}{m}{n}
\DeclareMathSymbol{\Sha}{\mathalpha}{cyrletters}{"58}
\theoremstyle{definition}
\newtheorem{definition}{Definition}
\newtheorem{ex}{Example}
\theoremstyle{plain}
\newtheorem{thm}{Theorem}[section]
\newtheorem{lm}[thm]{Lemma}
\newtheorem{propo}[thm]{Proposition}
\newtheorem{coro}[thm]{Corollary}
\theoremstyle{remark}
\newtheorem{rem}{Remark}
\def\Z{\mathbb{Z}}
\def\N{\mathbb{N}}
\def\Q{\mathbb{Q}}
\def\R{\mathbb{R}}
\def\E{\mathscr{E}}
\def\P{\mathbb{P}}
\def\F{\mathscr{F}}
\title{Root number of twists of an elliptic curve}
\author{Julie DESJARDINS}
\date{}
\begin{document}

\begin{otherlanguage}{english}

\maketitle

\abstract{
We give an explicit description of the behaviour of the root number in the family given by the twists of an elliptic curve $E/\Q$ by the rational values of a polynomial $f(T)$. In particular, we present a criterion for the family to have a constant root number over $\Q$. This completes work by Rohrlich: we detail the behaviour of the root number when $E$ has bad reduction over $\Q^{ab}$ and we treat the cases $j(E)=0,1728$ which were not considered previously.
}


\section{Introduction}

This paper is concerned with the behaviour of the root number in a one-parameter family of twists of an elliptic curve by the values of a polynomial $f\in\Z[T]$, or equivalently, in the fibres of an isotrivial elliptic surface. We will see that the root number respects a certain type of periodicity, and we will give a criterion to predict when it is constant.

Let $E$ be an elliptic curve over $\Q$. The \textbf{root number} $W(E)$ is defined as the product of the local root numbers $W_p(E)\in\{\pm1\}$: $$W(E)=\prod_{p\leq\infty}{W_p(E)},$$
where $p$ runs through the finite and infinite places of $\Q$. These local factors, defined in terms of the epsilon factors of the Weil-Deligne representation of $\Q_p$ and explained in details by Rohrlich in \cite{Rohr2}, have the property that $W_p(E)=1$ for all but finitely many $p$. Rohrlich \cite{Rohr} gives an explicit formula for the local root numbers in terms of the reduction of the elliptic curve $E$ at a prime $p\not=2,3$. Moreover, we always have $W_\infty(E)=-1$. The remaining cases $p=2,3$ are covered by Halberstadt \cite{Halb} (see also Rizzo \cite{Rizz}). 

Let $L(E,s)$ denote the $L$-function of $E$. Then $W(E)$ is equal to the sign of the functional equation of $L$ by the Modularity Theorem over $\Q$. Note that over general number fields $K\not=\Q$, such an equality is only conjectural.

The Birch and Swinnerton-Dyer conjecture implies the following statement, known as the parity conjecture $$W(E)=(-1)^{\mathrm{rank }E(\Q)}.$$

A consequence of this conjectural equality is that it suffices to have $W(E)=-1$ for the rank of $E(\Q)$ to be non-zero, and in particular for $E(\Q)$ to be infinite.

By a \textbf{one-parameter family of elliptic curves} we mean the collection of the fibres of an elliptic surface over $\Q$ (with a section) given by a Weierstrass equation $$\E:y^2=x^3+A(T)x+B(T),$$
where $A(T),B(T)$ are polynomials with coefficients in $\Z$, and the discriminant is denoted by $ \Delta(T)$. For $t\in\P^1$ such that $\Delta(t)\not=0$, the \textbf{fibre at $t$}, $\E_t:y^2=x^3+A(t)x+B(t)$, is an elliptic curve. In this paper we consider the case where the family is isotrivial, i.e. when its $j$-invariant function $t\mapsto j(\E_t)=\frac{4A(t)^3}{\Delta(t)}$ is constant. In that case, the curves $\E_t$ are twists of one another, and $\E$ can be seen as a subfamily of the families of all twists of $\E_1$:
\begin{enumerate}
\item (quadratic twists, $j(\E)\not=0,1728$) $y^2=x^3+aH(T)^2x+bH(T)^3$,
\item (quartic twists, $j(\E)=1728$) $y^2=x^3+A(T)x$,
\item (sextic twists, $j(\E)=0$) $y^2=x^3+B(T)$,
\end{enumerate}
where $a,b\not=0$ and $H(T)$, $A(T)$, $B(T)$ are non-zero polynomials with integer coefficients.

In a previous article \cite{Desjardins1}, the author proves that the function $t\mapsto W(\E_t)$ defined by the root number on a non-isotrivial family $\E$ is never periodic (i.e. constant on a congruence class of $t$). More precisely, she proves that the sets $W_{\pm}=\{t\in\P^1 \ \vert \ W(\E_t)=\pm1\}$ are both infinite, which implies (under the parity conjecture) the Zariski-density of the rational points $\E(\Q)$.

For families of twists thought, it can happen that $W(\E_t)$ takes the same value for every $t\in\P^1$ associated to a smooth $\E_t$, and (more alarmingly if one is interested in proving the Zariski-density) equal to $+1$, as observed previously in each of the two special cases:
\begin{enumerate}
\item (Cassels and Schinzel \cite{CS}) $y^2=x^3-(1+T^4)^2x$,
\item (V\'arilly-Alvarado \cite{VA}) $y^2=x^3+27T^6+16$.
\end{enumerate}
Observe that the first example is a K3 surface and that the second is a rational elliptic surface. These specific surfaces have however a Zariski-dense set of rational points - the proof can be found respectively in \cite{Zhizhong} and in \cite[Example 7.1]{VA}. By the time the present article was published, the author and B. Naskr\c ecki \cite{DesjardinsBartosz} released a preprint presenting a simple algorithm to find the generic rank of any elliptic surface of the form $y^2=x^3+AT^6+B$, which generalizes V\'arilly-Alvarado's result.

It is not possible that all quadratic twists have the same root number, as observed by Dokchitser and Dokchitser \cite{Dokchitserquadratictwists}. However, they proved that the elliptic curve \begin{equation}\label{surface3}y^2=x^3+x^2-12x-\frac{67}{4}\end{equation} has root number $+1$ over every extension of $K=\Q(\sqrt[4]{-37})$.
This curve has the additional property that any twist by an integer $t\in\N$ has root number $+1$ over $\Q$ and any twist by $-t$ has root number $-1$. Thus, polynomials $f$ with only positive values (resp. only negative values) define a family of twists with constant root number $+1$ (resp. $-1$). For instance, $(T^2+1)y^2=x^3+x^2-12x-\frac{67}{4}$ has constant root number $+1$. However, the density of the rational points is proven on every family of quadratic twists of an elliptic curve by a $f$ with degree $\leq 2$ \cite[Theorem 4.2.]{Desjardins2}.

In this article, we describe for which elliptic curves $E$ the root number of a twist by $t\in\Z$ only depends on the sign of $t$ (Lemma \ref{brot}). Moreover, for general elliptic curves $E$ we describe the behaviour of the root number (Theorem \ref{introiso}).

\subsection{Notation}

Throughout the paper we use two non-standard notations. Given an integer $\alpha\in\Z$ and a prime number, we denote by $\alpha_{(p)}$ the integer such that $$\alpha=p^{v_p(\alpha)}\alpha_{(p)}.$$  Similarly, we define $\alpha_{(d)}$ with $d=\prod{p_i^{e_i}}\in\N$ as the integer $$\alpha=\left(\prod_{i} p_i^{v_{p_i}(\alpha)}\right)\alpha_{(d)}.$$
We will denote by $sq(\alpha)$ and call the \textit{square part} of $\alpha$ the integer $$sq(\alpha)=\prod_{i\text{ such that }e_i\text{ is even.}}{p_i}$$
Incidentally, $sq(\alpha)_{(p)}$ refers to the integer $$sq(\alpha)_{(p)}=\prod_{i}{p_i},$$
where i ranges on $e_i$ even and $p_i\not=p$. We also call $\mathrm{sgn}(\alpha)$ the sign of $\alpha$.
\subsection{Main results}

We are interested in the behaviour of the root number in a one-parameter family of twists of elliptic curves, that is, to say in the fibres of an isotrivial elliptic surface. 
\begin{definition}
\begin{enumerate}
\item A function $f:\mathscr{F}\subseteq\Z\rightarrow\{\pm1\}$ is periodic (or $N$-periodic) if there exists a positive integer $N$ such that for each $t,t'\in\mathscr{F}$
$$t\equiv t'\mod N \Rightarrow f(t)=f(t').$$

We denote a \emph{congruence class modulo $N$} by $[t]$ (where $t\in\Z$ is a representative of the congruence class).

\item A function $f:\mathscr{F}\subseteq\Z\rightarrow\{\pm1\}$ is square-periodic (or $(N,M)$-square-periodic) if there exist positive integers $N, M$ such that for $t,t'\in\mathscr{F}$ that we write in its factorisation into prime factors $t=p_1^{e_1}\cdot p_r^{e_r}$ and $t'=q_1^{e'_1}\cdot q_s^{e'_s}$, we have
$$t\equiv t'\mod N\text{ and }sq(t)\equiv sq(t')\mod M \Rightarrow f(t)=f(t'),$$
where $sq(t)=\prod_{e_i \text{ even}}{p_i}$ and $sq(t')=\prod_{e_i' \text{ even}}{q_i}$.

We call a \emph{square-congruence class} $[t]_{sq}\mod N,M$ the set of the integers $t'$ such that $t'\equiv t\mod N$ and $sq(t')\equiv sq(t)\mod M$.

\end{enumerate}
\end{definition}


Let $E$ be the elliptic curve defined by the Weiestrass equation $y^2=x^3+ax+b$ and let $t\in\Q\setminus\{0\}$. Then the \emph{twist by $t$ of $E$} is the elliptic curve $E_t$ given by the following Weiestrass equation:
\begin{enumerate}
\item $E_t:y^2=x^3+at^2x+bt^3\quad$ if $j(E)\not=0,1728$ (i.e. $ab\not=0$),
\item $E_t:y^2=x^3+atx\quad$ if $j(E)=1728$ (i.e. $b=0$),
\item $E_t:y^2=x^3+bt\quad$ if $j(E)=0$ (i.e. $a=0$).
\end{enumerate}

Note also the following isomorphisms for all $t\in\Q\setminus\{0\}$: $E_{t^2}\cong E$ if $j(E)\not=0,1728$,
 $E_{t^4}\cong E$ if $j(E)=1728$,
 $E_{t^6}\cong E$ if $j(E)=0$.
As a consequence, for any $ t=\frac{p}{q}\in\Q\setminus\{0\}$, the twist $E_t$ is isomorphic to $E_{t'}$ for 
 $t'=qp$ if $ab\not=0$;
 $t'=pq^3$ if $b=0$;
 $t'=pq^5$ if $a=0$. It is thus sufficient to study the twists by integers.

The results of this article are summarized in the following theorem.

\begin{thm}\label{introiso}
Let $E$ be an elliptic curve and for $t\in\Z$ denote by $E_t$ its twist by $t$.

\begin{enumerate}
\item Suppose that $j(E)\not=0,1728$. Define $\mathscr{F}_2$ to be the set of squarefree integers, and $\mathscr{F}_2^+$ (respectively $\mathscr{F}_2^-$) the subset of $t\in\mathscr{F}_2$ with $\mathrm{sgn}(t)=+1$ (resp. $\mathrm{sgn}(t)=-1$).
Then

\begin{enumerate}
\item The root number can be written as the following product
$$W(E_t)=-W_2(E_t)W_3(E_t)\left(\frac{-1}{\vert t_{(6\Delta)}\vert}\right)\left(\prod_{p\mid\Delta_{(6)}}{W_p(E_t)}\right)
$$
where 
 $\left(\frac{\cdot}{\cdot}\right)$ is the Jacobi symbol.
 
\item\label{intro1b} the function $t\mapsto W(E_t)$ is periodic on $\mathscr{F}_2^\pm$.
\item The root number $W(E_t)$ is not constant when $t$ runs through $\mathbb{Z}\setminus\{0\}$. However, if $E$ satisfies the properties of Lemma \ref{brot}, it is constant on $\mathbb{Z}_{<0}$ and $\mathbb{Z}_{>0}$.
\end{enumerate}

\item Suppose that $j(E)=1728$. Define $\mathscr{F}_4$ to be the set of fourth-powerfree integers, and $\mathscr{F}_4^+$ (respectively $\mathscr{F}_4^-$) the subset of $t\in\mathscr{F}_4$ with $\mathrm{sgn}(t)=+1$ (resp. $\mathrm{sgn}(t)=-1$). Then 
\begin{enumerate}
\item \label{introsigneisoTx} The root number can be written as the following product
\begin{displaymath}
W(E_t)=-W_2(E_t)W_3(E_t)\left(\frac{-2}{\vert t_{(6)}\vert}\right)\left(\frac{-1}{sq(t)_{(6)}}\right).
\end{displaymath}

\item The function $t\mapsto W(E_t)$ is square-periodic on $\mathscr{F}_4^\pm$.

\item The root number $W(E_t)$ is not constant when $t$ runs through $\Z\setminus\{0\}$.
\end{enumerate}

\item Suppose that $j(E)=0$. Define $\mathscr{F}_6$ to be the set of sixth-powerfree integers, and $\mathscr{F}_6^+$ (respectively $\mathscr{F}_6^-$) the subset of $t\in\mathscr{F}_6$ with $\mathrm{sgn}(t)=+1$ (resp. $\mathrm{sgn}(t)=-1$). Then

\begin{enumerate}

\item \label{introsigneisoT} The root number can be written as the following product
\begin{displaymath}
W(E_t)=-W_2(E_t)W_3(E_t)\left(\frac{-1}{\vert t_{(6)}\vert}\right)\left(\frac{sq(t)_{(6)}}{3}\right).
\end{displaymath}

\item The function $t\mapsto W(E_t)$ is square-periodic on $\mathscr{F}_6^\pm$,

\item The root number $W(E_t)$ is not constant when $t$ runs through $\Z\setminus\{0\}$. 
\end{enumerate}

\end{enumerate}

\end{thm}




The article is organised as follows. In the rest of the introduction, we relate our results to previous work, in particular to Rohrlich's. In Section \ref{monodromiefacile} we study the monodromy of the reduction on a family of twists in each of the three cases $j\in\Q\setminus\{0,1728\}$, $j=1728$ and $j=0$, and in Section \ref{localrootnumber} we use it to describe the variation of the root number between an elliptic curve and one of its twists. We conclude the paper in Section \ref{proof} by proving Theorem \ref{introiso} in each of the three cases.


\subsection{Recollection on Rohrlich's results}

Theorem \ref{introiso} completes the following result on the variation of the root number of quadratic twists due to Rohrlich, and extends it to the case $ab=0$: the quartic and sextic twists families. 

\begin{thm} \cite[Theorem 2]{Rohr}
Let $a,b\in\Z\setminus\{0\}$ such that $4a^3+27b^2\not=0$. Consider the elliptic curve given by the equation $E:y^2=x^3+ax+b$. Let $f(t)\in\Z[t]$ and the family of quadratic twists given by the equation

$$E_{f(t)}:y^2=x^3+af(t)^2x+bf(t)^3.$$

Then, one of the two properties holds :
\begin{enumerate}
\item The sets $W_+$ and $W_-$ are dense in $\R$.
\item The sets $W_+$ and $W_-$ are $\{t\in\Q\ \vert \ f(t)<0\}$ and $\{t\in\Q\ \vert \ f(t)>0\}$ (in either order).
\end{enumerate}
Moreover, given $E$:
\begin{enumerate}[a.]
\item there exists $f$ such that the second assertion holds and such that the number of change of the sign of $f$ over $\R$ is greater that any given value.
\item if $E$ has good reduction on an abelian extension of $\Q$, then the second assertion holds. 
\end{enumerate}
\end{thm}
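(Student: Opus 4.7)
The plan is to start from the explicit formula $W(E_{f(t)})=-\prod_p W_p(E_{f(t)})$ and show that the dependence on $t$ factors through $\mathrm{sgn}(f(t))$ and a product of Dirichlet-type characters. For each prime $p\nmid 6\Delta$, Rohrlich's quadratic-twist formulas give $W_p(E_{f(t)})=1$ unless $p$ divides $f(t)$ to odd order, in which case it equals (essentially) $\left(\tfrac{-1}{p}\right)$. Collecting these over all such $p$ yields a single Jacobi-symbol factor $\left(\tfrac{-1}{\vert f(t)_{(6\Delta)}\vert}\right)$, in which only primes of odd valuation contribute. The remaining factors, for $p\mid 6\Delta$, are furnished by the Halberstadt--Rizzo tables at $p=2,3$ and Rohrlich's tables at $p\geq 5$: each depends on $f(t)$ modulo a bounded $p$-power, so together they produce a factor $\chi_E(f(t))$ depending only on $f(t)$ modulo a fixed integer $M=M(E)$.

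The heart of the argument is the resulting dichotomy. One has $W(E_{f(t)})=-\chi_E(f(t))\,\left(\tfrac{-1}{\vert f(t)_{(6\Delta)}\vert}\right)$, and I would ask whether the ``extra'' factor $\chi_E(f(t))\left(\tfrac{-1}{\vert f(t)_{(6\Delta)}\vert}\right)$ depends on $t$ only through $\mathrm{sgn}(f(t))$ (giving case (2)) or not (forcing case (1)). In the latter case, I would show $W_\pm$ are both dense by producing, in every real interval $I$, rationals $t\in I$ for which $f(t)$ contains a single controlled ``new'' prime factor $q$ with residue chosen to flip the Jacobi symbol while keeping $f(t)$ in a fixed class modulo $M$. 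This requires a Chebotarev- or sieve-style input controlling the prime factorisations of values of $f$ in small intervals, and is the main technical obstacle of the proof.

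For the two addenda: if $E$ has good reduction over an abelian extension of $\Q$, then every inertia character appearing in its bad-prime local representations factors through a cyclotomic character of $\Q_p^\times$; $\chi_E$ is then a genuine Dirichlet character, and quadratic reciprocity collapses $\chi_E(f(t))\left(\tfrac{-1}{\vert f(t)_{(6\Delta)}\vert}\right)$ into a single Dirichlet character in $f(t)$, which together with the sign yields case (2) directly. To construct $f$ with arbitrarily many sign changes realising case (2), one chooses $f(t)=c\prod_{i=1}^{r}(t-a_i)$ with the $a_i$ placed in a common residue class modulo $M$ selected so that $\chi_E(f(t))\left(\tfrac{-1}{\vert f(t)_{(6\Delta)}\vert}\right)$ is constant along the image of $f$; the integer $r$ is then free, producing as many sign changes as desired.
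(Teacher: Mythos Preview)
This theorem is not proved in the paper. It is quoted verbatim from Rohrlich (the citation \cite[Theorem~2]{Rohr} is part of the statement) in Section~\ref{prev} as background, precisely the ``previous result'' that Theorem~\ref{introiso} is meant to refine and complete. The paper offers no argument for it; the discussion following the statement only explains what Rohrlich's theorem does and does not allow one to conclude, and motivates why the paper's own results (especially Lemma~\ref{brot} and Theorems~\ref{thmA}, \ref{thmB}) are needed to fill the gaps. So there is nothing in the paper to compare your proposal against.

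As for the proposal itself: your overall decomposition
\[
W(E_{f(t)})=-\chi_E(f(t))\left(\frac{-1}{\lvert f(t)_{(6\Delta)}\rvert}\right)
\]
is exactly the shape of formula the paper derives for quadratic twists (cf.\ the proof of Lemma~\ref{brot}), and your reading of the abelian-reduction case and the construction of $f$ with many sign changes are along the right lines. The real content of Rohrlich's theorem, however, is the density assertion in case~(1), and here your proposal is only a programme: you say you would ``produce, in every real interval $I$, rationals $t\in I$ for which $f(t)$ contains a single controlled new prime factor $q$'' via a Chebotarev/sieve input, but you do not actually supply that input. Controlling the squarefree kernel of $f(t)$ for $t$ in a prescribed small real interval is exactly the delicate point, and it is not handled by the Chinese Remainder Theorem alone; Rohrlich's proof carries this out carefully. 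So your sketch identifies the correct structure but does not close the main gap.
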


As a conclusion of this theorem, one obtains :

\noindent 1) if $E$ has good reduction over $\Q^{ab}$ then $E_{f(t)}$ has constant root number if and only if $f(t)$ takes the same sign for all $t\in\Q$,

\noindent 2) if $E$ has bad reduction over $\Q^{ab}$, then

\noindent a. if $f(t)$ does not always take the same sign, then $W_+$ and $W_-$ have infinite cardinality.
	
\noindent b. if $f(t)>0$ (or $<0$), then Rohrlich's theorem does not allow us to conclude directly. In order to know if the root number of the fibres is constant or not, the use of Theorem \ref{introiso} is necessary. Here is how to proceed:

\begin{enumerate}[(1)]
\item Find $N$ the smallest integer such that for each $t,t'\in\mathscr{F}_2$ the congruence $t\equiv t'\mod N$ implies $W(\E_t)=W(\E_{t'})$, or in other terms the smallest $N$ for which the root number function $t\longrightarrow W(E_t)$ is $N$-periodic. The existence of this $N$ is given by Theorem \ref{introiso}. The value of $N$ depends on the coefficients of a Weierstrass equation of $E$ and can be found with the help of Corollary \ref{decompositiontwist}). 
 
\item Then determine in which of the equivalence classes modulo $N$ are the values of the squarefree factors of $f(t)$. Take representatives $t_1,...,t_n$ such that each of the $f(t_i)$ represents a class modulo $N$ (that can be obtained). If the root number of the fibres of $E_{f(t_i)}$ all have the same value, then the root number function is constant and we are in Rohrlich's case 2. Otherwise, it varies and we are in case 1. 
\end{enumerate}

Let us explain this with an example:

\begin{ex}\label{ex1}
Let $a=2\cdot7\cdot17=238$ and $b=2^3\cdot7\cdot17=952$, and $f(t)=t^{86}+14$. We study the variation of the root number in the fibres of the family $$E_{f(t)}:y^2=x^3+238(t^{86}+14)^2x+952(t^{86}+14)^3.$$ Note that $E:y^2=x^3+238x+952$ has multiplicative reduction over $\Q^{ab}$ (since $173$ is a place of multiplicative reduction over $\Q$). Indeed, we have $\Delta(E)=-2^9\cdot7^2\cdot17^2\cdot173$.

First find the integer $N$ for which $E$ is $N$-periodic. For this, observe that the bad places are $2$, $7$ (type $II$), $17$ (type II) and $173$ (type $I_1$). Then by Corollary \ref{decompositiontwist} the root number of $E_{t}$ the twist by $t$ is 
$$W(E_{t})=-\prod_{p=2,3,7,17,173}{w_p(t)},$$
where $w_p(t)$ are local contributions at each $p$ determined as follows:
$$w_p(t)=\begin{cases}\mathrm{sgn}(t)D_2\left(\frac{-1}{\mid t_{(2)}\mid}\right)W_2(E)& \text{for }p=2\\ (-1)^{v_3(t)}D_3W_3(E)&\text{for }p=3\\ D_p\left(\frac{-1}{p}\right)^{v_p(t)}W_p(E)&\text{for }p=7,17\\ 
\left(\frac{t_{(173)}}{173}\right)D_pW_p(E)&\text{for $p=173$,}
\end{cases}$$
and the $D_p\in\{\pm1\}$ are the values depending on $t$ such that $W_p(E_t)=D_pW_p(E)$. They are given by the Lemmas \ref{signe2special}, \ref{signe3special} and \ref{variationquadratique}. Although an appropriate $D_p$ is not always given by Lemma \ref{signe2special} or \ref{signe3special} for $p=2,3$ in the case of a general elliptic curve, as is the case in this example. Example \ref{ex3} explains what to do otherwise.
\begin{itemize}
\item For 2: we have $(v_2(a)+4,v_2(b)+5,v_2(\Delta))=(5,8,9)$, $b_{(2)}\equiv3\mod4$ and $a_{(2)}\equiv7\mod8$. According to Lemma \ref{signe2special}, $w_2(t)$ the local contribution at 2 of the root number of the twists by $t$ takes the same value for any value of $t_{(2)}$ and of $v_2(t)$: $w_2(t)=+1$.
\item For 3: we have $(v_3(a)+1,v_3(b)+3,v_3(\Delta))=(1,3,0)$. According to Lemma \ref{signe2special}, $w_3$ the local contribution at 3 of the root number of the twists by $t$ will take the same value for any value of $t_{(3)}$ and of $v_3(t)$: $w_3(t)=+1$.
\end{itemize}
For primes $p\not=2,3$, Proposition \ref{variationquadratique} gives the value of $D_p$.
\begin{itemize}

\item For $7$ and $17$, the type of reduction is $II$ and we have: $$w_p(t)=\begin{cases}\left(\frac{3}{p}\right)\left(\frac{-1}{p}\right)W_p(E)&\text{if }p\mid t\\W_p(E)& \text{if $p\nmid t$.}\end{cases}$$ 
Since $7\equiv 1\mod6$ we have $w_7(t)=W_7(E)=-1$ for all $t$ squarefree integer, and since $17\not\equiv1\mod6$ we have $$w_{17}(t)=\begin{cases}W_{17}(E)=+1& \text{ if $v_{17}(t)$ even}\\
-W_{17}(E)=-1&\text{ if $v_{17}(t)$ odd.}\end{cases}$$

\item For $173$: the reduction is multiplicative, we have $$w_{173}(E_t)=\begin{cases}\left(\frac{t}{173}\right)W_{173}(E)=\left(\frac{t}{173}\right)&\text{if }173\nmid t\\ -\left(\frac{-6b_{(173)}}{173}\right)\left(\frac{t_{(173)}}{173}\right)W_{173}(E)=\left(\frac{t_{(173)}}{173}\right)
& \text{if }173\mid t\end{cases}$$
so we have simply $w_{173}(E_t)=\left(\frac{t_{(173)}}{173}\right)$.
\end{itemize}
As a consequence, we see that $W(E_t)$ takes the same value on the congruence classes modulo $17^2\cdot173$, so that the root number of the twists is $N$-periodic for $N=17^2\cdot173$. It is however somehow more convenient in this case to observe the more precise fact that it takes the same value for $t$ and $t'$ such that $t_{(173)}\equiv t'_{(173)}\mod 173$ and with $v_{17}(t)\equiv v_{17}(t')\mod 2$. 

An easy consequence of Fermat's little theorem is that $f(t)=t^{86}+14$ takes the values among $\{13,14,15\} \mod 173$. Thus we have $w_{173}(f(t))=\left(\frac{f(t)_{(173)}}{173}\right)$. This Jacobi symbol is equal to $+1$ for all squarefree $t$ (since $\left(\frac{11}{173}\right)=\left(\frac{12}{173}\right)=\left(\frac{13}{173}\right)=+1$). 

It is not so hard to check that $f(t)$ takes values among $\{1,5,6,10,12,13,15,16\} \mod 17$, and in particular that $17\nmid f(t)$ for any value of $t$. Thus $w_{17}(f(t))=+1$.  

This proves that the root number is constant on the family $E_{f(t)}$ and always takes the value $+1$.
\end{ex}

Sometimes, the computation of $N$ is not even necessary since a basic check proves that the root number varies:

\begin{ex}
If we twist $E:y^2=x^3+238x+952$ by the polynomial $g(t)=t^{86}+1$ instead: we have $W(E_{g(0)})=W(E)=1$ and $W(E_{g(1)})=W(E_2)=-1$
so the root number is not constant on the family $E_{g(t)}$. (When we look in detail, we see that the variation comes from $w_{173}$: we have $w_{173}(1)=+1$ and $w_{173}(2)=-1$. The other contributions are such that $w_p(1)=w_p(2)$.)
\end{ex}

In some other cases, it is more convenient to take a shortcut when we search for $N$, in particular when the local root number at $2$ or $3$ is not listed in Lemma \ref{signe2special} or \ref{signe3special}. Here is an example:

\begin{ex}\label{ex3}
In our first examples, finding the appropriate $N$ was easy because the functions $w_2(t)$ and $w_3(t)$ were constant by Lemma \ref{signe2special} and \ref{signe3special}. Let us choose another base curve, say $E':y^2=x^3+2\cdot17x+2^2\cdot17$, and study the family of twists of $E'$ by the values of the function $h(t)=8t^{30}+5$:
$$E'_{h(t)}:y^2=x^3+34(8t^{30}+5)^2x+68(8t^{30}+5)^3.$$ 
The discriminant of $E'$ is $\Delta(E')=-2^817^261$, so the reduction of $E'$ at 17 has type $II$ and the reduction at 61 is $I_1$. The root number can be written as 
$$W(E'_{t})=-\prod_{p=2,3,17,61}{w_p(t)},$$
with $w_p(t)$ as in Corollary \ref{decompositiontwist}. Let us find them explicitly in this case.
\begin{itemize}
\item  For 2:  we have $(v_2(a)+4,v_2(b)+5,v_2(\Delta))=(5,7,8)$, and this triple is not listed in Lemma \ref{signe2special} as one with $w_2(t)$ constant on the family $E'_t$. Some additional work must be done here with Rizzo's table III \cite{Rizz}. From there we extract the formula (for odd $t$):
$$W_2(E'_t)=\begin{cases}
+1 & \text{if }6\cdot17t^2+27\cdot17t^3\equiv1\mod8\text{ or }27\cdot17t^3\equiv5\mod 8\\ 
-1 & \text{otherwise}.
\end{cases}$$
In our choice of $h(t)=8t^{30}+5$, not only is $h(t)$ always positive and odd, but it is also always such that $h(t)\equiv5\mod8$. 
Consequently, $102h(t)^2+459h(t)^3\equiv59925\equiv5\mod8$ and $459h(t)^3\equiv7\mod8$ for any $t$. This means that $W_2(E'_{h(t)})=W_2(E')=-1$ (in particular $D_2=+1$ for all $t$), and thus that $w_2(h(t))=\mathrm{sgn}(h(t))\left(\frac{-1}{h(t)}\right)D_2W_2(E')=-1$ for all $t\in\Z$.
\item By Lemma \ref{signe3special}, $w_3(t)=+1$ for all $t$. 
\item Similarly as in Example \ref{ex1}, we have
$$w_{17}(t)=\begin{cases}+1&\text{if $v_{17}(t)$ even},\\-1&\text{if }v_{17}(t)\text{ odd}\end{cases}$$ and $$w_{61}(t)=\begin{cases}-\left(\frac{t}{61}\right)&\text{if }61\nmid t\\\left(\frac{t}{61}\right)&\text{if }61\mid t.\end{cases}$$
\end{itemize}
As a consequence, the root number $W(E'_{h(t)})$ is $17^2\cdot61$-periodic. We can be even more precise.
A consequence of Fermat's little theorem is that $h(t)=8t^{30}+5$ takes the values among $\{5,13,58\} \mod 61$. We have thus $w_{61}(h(t))=-\left(\frac{h(t)}{61}\right)=-1$ for all $t$. The polynomial $h(t)$ takes values among $\{1,3,4,5,6,7,9,13,14\}$ modulo $17$ and hence we always have $w_{17}(h(t))=+1$ for all $t$. Thus we have for every $t\in\Z$:
$$W(E'_{h(t)})=-(-1)(+1)(+1)(-1)=-1.$$
\end{ex}

\subsection{More formulae}

Birch and Stephens \cite{BS} prove formulae for the root number of $y^2=x^3-Dx$, and for the root number of $z^3=x^3+A$ (this curve can be rewritten as the equation $y^2=x^3-432A^2$). Liverance \cite{Liv} completes these results by giving a formula for the root number of $y^2=x^3+D$ in the general case.

The formulae given in the points \ref{introsigneisoT} and \ref{introsigneisoTx} of Theorem \ref{introiso} have a flavor different from that found in those two papers, in particular, it distinguishes between primes $p\geq5$ according to whether or not $p^2\mid t$, in a similar way to the formulae of V\'arilly-Alvarado \cite[Prop. 4.4 and 4.8]{VA}.

Connell \cite{Connell} computer-implemented the root number formulae from Rohrlich \cite{Rohr}, Liverance \cite{Liv}, Birch and Stephens \cite{BS}.

\section{Monodromy of the reduction}\label{monodromiefacile}
\subsection{For quadratic twists}
Let $E$ be the elliptic curve given by the Weierstrass equation $E:y^2=x^3+ax+b,$ where $a,b\in\Z\setminus\{0\}$ and let $\Delta$ be its discriminant, that we suppose minimal.
For every $t\in\Z\setminus\{0\}$, consider the twist of $E$ by $t$,
$E_t:y^2=x^3+at^2x+bt^3.$
Tate's algorithm allows us to show the following lemma:

\begin{lm} We have:
\begin{enumerate}
\item If $v_p(t)$ is even, then $E$ and $E_t$ have the same type of reduction at $p$.
\item If $v_p(t)$ is odd, then the type of $E_t$ and $E$ are among the following possibilities (the order is not important)
\begin{enumerate}
\item $I_0$ and $I_0^*$
\item $I_m$ and $I_m^*$
\item $II$ and $IV^*$
\item $II^*$ and $IV$
\item $III$ and $III^*$
\end{enumerate} 
\end{enumerate}
\end{lm}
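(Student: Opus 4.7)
The plan is to use the $\Q$-isomorphism $E_s \cong E_{s'}$ whenever $s/s'$ is a rational square to reduce, at a given prime $p$, to two basic comparisons: $E$ versus $E_u$ for a $p$-adic unit $u$ (the even-parity case), and $E$ versus $E_p$ for the uniformizer (the odd-parity case, once a further unit has been absorbed by the even case). Tate's algorithm then settles each remaining instance.

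Fix a prime $p$, and write $t = p^v u$ with $\gcd(u, p) = 1$. If $v = 2k$, then $t/u = p^{2k}$ is a square in $\Q$, so $E_t \cong E_u$ over $\Q$; in particular their Kodaira types at $p$ agree. Using $c_4(E_u) = u^2 c_4(E)$, $c_6(E_u) = u^3 c_6(E)$, $\Delta(E_u) = u^6 \Delta(E)$, one checks that the triples of $p$-adic valuations are identical for $E$ and $E_u$, so the two Weierstrass equations have the same minimality behaviour at $p$, and every residue-class condition invoked by Tate's algorithm (relevant at $p \in \{2, 3\}$) is preserved under the unit scaling $(a, b) \mapsto (u^2 a, u^3 b)$. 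Hence $E$ and $E_u$ share a Kodaira type at $p$, which proves (1).

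For $v = 2k+1$, the same isomorphism yields $E_t \cong E_{pu}$ over $\Q$, and the even-case argument applied to the base curve $E_p$ in place of $E$ absorbs the unit $u$; it therefore remains to compare the Kodaira types of $E$ and $E_p$. I would run Tate's algorithm on $E_p : y^2 = x^3 + ap^2 x + bp^3$, case by case on the Kodaira type of $E$: the valuations shift by $(v_p(c_4), v_p(c_6), v_p(\Delta)) \mapsto (v_p(c_4)+2, v_p(c_6)+3, v_p(\Delta)+6)$, one performs the admissible reduction $(x, y) \mapsto (p^{-2} x, p^{-3} y)$ whenever the shifted triple permits it, and then reads off the Kodaira type. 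The five pairings $(I_0, I_0^*)$, $(I_n, I_n^*)$, $(II, IV^*)$, $(III, III^*)$, $(IV, II^*)$ emerge by direct verification.

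The hard part is the odd-parity analysis at $p \in \{2, 3\}$, where the Kodaira type is not determined by $(v_p(c_4), v_p(c_6), v_p(\Delta))$ alone; one must carefully track how the additional congruence conditions of Tate's algorithm (equivalently, the Halberstadt--Papadopoulos tables) transform under the twist. A conceptual cross-check is provided by the inertia representation on $T_\ell E$: a ramified quadratic twist at $p$ acts by tensoring with a ramified quadratic character of $\Q_p^\times$, and this character is known to exchange precisely the five pairs of inertia types appearing in the lemma, so no other pairing is possible.
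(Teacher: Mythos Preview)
The paper's own proof is a single sentence --- ``Tate's algorithm allows to show the following lemma'' --- so your argument is the same approach, simply fleshed out; for $p\ge 5$ everything you wrote is correct and more informative than what the paper provides.

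There is, however, a genuine issue in your even-parity step at $p=2$. The claim that ``every residue-class condition invoked by Tate's algorithm is preserved under the unit scaling $(a,b)\mapsto(u^2a,u^3b)$'' is false there, and in fact part~(1) of the lemma itself fails at $p=2$. For odd $p$, a twist by a $p$-adic unit corresponds to an \emph{unramified} quadratic character, so the N\'eron special fibre --- hence the Kodaira symbol --- is unchanged. But $\Q_2^\times/(\Q_2^\times)^2$ has three nontrivial unit classes, and only the class of $5$ gives the unramified quadratic extension; twisting by a unit $u\equiv 3$ or $7\pmod 8$ (for instance $u=-1$) is a \emph{ramified} twist. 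Concretely, if $E/\Q_2$ has good reduction then inertia acts trivially on $T_\ell E$, so on $T_\ell E_u=T_\ell E\otimes\chi_u$ it acts through the nontrivial order-two character $\chi_u|_{I_2}$, forcing $E_u$ to have additive reduction (type $I_0^*$, not $I_0$). Thus neither the residue-condition heuristic nor the conclusion survives at $p=2$. This is less a flaw in your strategy than in the stated range of the lemma: the paper only invokes it for $p\ge 5$ (Proposition~\ref{variationquadratique}), and the companion lemmas for quartic and sextic twists explicitly exclude $p=2,3$; the quadratic-twist lemma should carry the same restriction.
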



\subsection{For quartic twists}

Let $E$ be the elliptic curve given by the Weierstrass equation $E:y^2=x^3+ax,$ where $a\in\Z$ is a non-zero fourth-powerfree integer.
For every $t\in\Z\setminus\{0\}$, consider the twist of $E$ by $t$:
$$E_t:y^2=x^3+atx.$$
The discriminant is $\Delta(E_t)=-2^6a^3t^3.$
Tate's algorithm allows us to show the following lemma:

\begin{lm} The reduction at $p\not=2,3$ of $E_t$ has type $I_0$, $III$, $I_0^*$, $III^*$ if $v_p(at)\equiv0,1,2,3\mod4$ respectively.
\end{lm}

\subsection{For sextic twists}

Let $E$ be the elliptic curve given by the Weierstrass equation $E:y^2=x^3+b,$ where $b\in\Z$ is a non-zero sixth-powerfree integer.
For every $t\in\Z\setminus\{0\}$, consider the twist of $E$ by $t$:
$$E_t:y^2=x^3+bt.$$
The discriminant is $\Delta(E_t)=-2^43^3b^2t^2.$
Tate's algorithm allows to show the following lemma:
\begin{lm} The reduction at $p\not=2,3$ of $E_t$ has type $I_0$, $II$, $IV$, $I_0^*$, $IV^*$, $II^*$ if $v_p(bt)\equiv0,1,2,3,4,5\mod6$ respectively.

\end{lm}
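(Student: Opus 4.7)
The plan is to apply Tate's algorithm directly to $E_t: y^2 = x^3 + bt$ at a prime $p\geq 5$. Writing $n=v_p(bt)$, I would first reduce to the minimal model at $p$: the substitution $(x,y)\mapsto(p^{2}x,p^{3}y)$ divides the right-hand side by $p^{6}$, so iterating it lands $n$ in $\{0,1,\ldots,5\}$ while preserving the shape of the equation. Because the remaining $a_i$ are zero and $v_p(bt)<6$ at the end of this process, the resulting Weierstrass equation is minimal at $p$, and it suffices to check the correspondence between $n\in\{0,\ldots,5\}$ and the six Kodaira types claimed in the statement.

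Next I would compute the standard invariants for this shape of equation: $b_2=b_4=b_8=0$, $b_6=4bt$, $c_4=0$, $c_6=-864\,bt$ and $\Delta=-432\,(bt)^2$. Since $p\geq 5$, the $p$-adic valuations of $b_6$, $c_6$ and $\Delta$ equal those of $bt$ or $(bt)^2$. For $n=0$ we have $v_p(\Delta)=0$, so type $I_0$. For $n\geq 1$ the reduction is additive with cusp at $(0,0)$, and Tate's algorithm proceeds by successive tests against $v_p(a_6)$, $v_p(b_6)$ and the splitting modulo $p$ of an auxiliary cubic. The vanishing of $b_2,b_4,b_8$ collapses each of these tests to a condition on $v_p(bt)$ alone, which I would read off case by case: $n=1$ stops at $v_p(a_6)=1<2$, giving $II$; $n=2$ stops at $v_p(b_6)=2<3$, giving $IV$; at $n=3$ one passes to the cubic $T^{3}+b_6/p^{3}$, whose constant term is a unit in $\mathbb{F}_p$ and which therefore has three distinct roots over $\overline{\mathbb{F}}_p$ (using $p\neq 3$), giving type $I_0^*$.

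For $n=4$ and $n=5$, Tate's algorithm continues past $I_0^*$. The cleanest way to conclude is to invoke the cyclic symmetry underlying sextic twists: at $p\geq 5$ the inertia at $p$ acts on $E_t$ through a character of order dividing $6$, and twisting by $p$ cyclically permutes the six types $I_0\to II\to IV\to I_0^*\to IV^*\to II^*\to I_0$. Since passing from $v_p(bt)=n$ to $v_p(bt)=n+1$ is exactly such a twist, the pattern already established for $n\leq 3$ forces $n=4$ to yield $IV^*$ and $n=5$ to yield $II^*$. The only potential obstacle is the bookkeeping in the deep steps of Tate at $n=4,5$ if one prefers a hands-on verification; but because $c_4=0$ and $b_2=b_4=b_8=0$ trivialise every branching condition (in particular ruling out the $I_n^*$ branch), the algorithm terminates cleanly at the stated type without any delicate choice of uniformiser.
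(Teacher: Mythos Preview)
Your proposal is correct and follows essentially the same approach as the paper, which simply states that the lemma follows from Tate's algorithm without further detail. Your write-up supplies that detail accurately; the only remark is that for $n=4,5$ the quickest route, rather than the inertia-character symmetry, is to note that for $p\geq 5$ and $j$ integral the Kodaira type is determined by $v_p(\Delta_{\min})\in\{0,2,3,4,6,8,9,10\}$, and here $v_p(\Delta)=2n$ lands exactly on the entries for $IV^*$ and $II^*$.
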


\section{Behaviour of the local root number}\label{localrootnumber}



\subsection{Local root number of a quadratic twist}

For $p\not=2,3$ a simple formula gives the local root number of an elliptic curve $E$ according to the type of its reduction:  \begin{propo}\label{Rohrlichroot} (\cite[Proposition 2]{Rohr}) Let $p\geq5$ be a rational prime, and let $E/\Q_p$ be an elliptic curve given by the Weierstrass equation $E:y^2=x^3+ax+b,$
where $(a,b)\in\Z^2\setminus(0,0)$. 
Then
\begin{displaymath}
W_p(E)=\begin{cases}
1 & \text{if the reduction of $E$ at $p$ has type $I_0$};\\
\left(\frac{-1}{p}\right) & \text{if the reduction has type $II$, $II^*$, $I_m^*$ or $I_0^*$};\\
\left(\frac{-2}{p}\right) & \text{if the reduction has type $III$ or $III^*$};\\
\left(\frac{-3}{p}\right) & \text{if the reduction has type $IV$ or $IV^*$;}\\
-\left(\frac{6b}{p}\right) & \text{if the reduction has type $I_{m}$};\\
\end{cases}
\end{displaymath}
\end{propo}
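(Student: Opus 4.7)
My plan is to compute the local epsilon factor of the Weil-Deligne representation $\sigma'_p(E)$ attached to $E/\Q_p$ case by case, using the crucial fact that for $p\geq 5$ this representation is tame. The case $I_0$ is immediate: if $E$ has good reduction at $p$ then $\sigma'_p(E)$ is unramified, its conductor is $0$, and $W_p(E)=1$ by definition of the local root number.

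For multiplicative reduction (type $I_m$), the Weil-Deligne representation has the form $\mathrm{sp}(2)\otimes \chi$, where $\chi$ is the unramified quadratic character, trivial exactly when the reduction is split. Standard formulas for the epsilon factor of a Steinberg representation twisted by an unramified character give $W_p(E)=-\chi(\mathrm{Frob}_p)$, which equals $-1$ in the split case and $+1$ otherwise. For the short Weierstrass model $y^2=x^3+ax+b$ with $p\geq 5$, the split/non-split criterion is that $-c_6=864b$ be a square in $\mathbb{F}_p^\times$; since $864=4\cdot 6^3$ this is equivalent to $\left(\tfrac{6b}{p}\right)=1$, which yields $W_p(E)=-\left(\tfrac{6b}{p}\right)$.

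For the four additive cases, $E$ has potentially good reduction and, since $p\geq 5$, it acquires good reduction over a tamely and cyclically ramified abelian extension $L/\Q_p$. Inertia acts through a cyclic quotient of order $n\in\{2,3,4,6\}$, with $n=2$ for types $I_0^*, I_m^*$; $n=3$ for $IV, IV^*$; $n=4$ for $III, III^*$; $n=6$ for $II, II^*$ (in accordance with the monodromy lemmas of Section~\ref{monodromiefacile}). In each such case $\sigma'_p(E)$ is a two-dimensional tame representation, induced from a tamely ramified character of $W_L$, and its epsilon factor reduces to a Gauss sum attached to this character. Classical evaluations of quadratic, quartic, and sextic Gauss sums (via the Hasse-Davenport relation for $n=4,6$) identify this sum with the expected Jacobi symbol $\left(\tfrac{-1}{p}\right)$, $\left(\tfrac{-2}{p}\right)$ or $\left(\tfrac{-3}{p}\right)$. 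The pairs $T$ and $T^*$ produce inverse characters, hence yield the same epsilon factor by the self-duality of $V_\ell(E)$ coming from the Weil pairing.

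The main obstacle lies in the bookkeeping for the additive case: one must (i) match each Kodaira type to the precise tame character of the inertia group, (ii) fix a normalisation of the additive character and the Haar measure on $\Q_p$ so that the epsilon factor agrees with the conventions of \cite{Rohr2}, and (iii) verify that the Gauss sums specialise to the listed Jacobi symbols without sign errors. Passing to the minimal extension $L/\Q_p$ over which $E$ attains good reduction, and using the $\Q_\ell$-self-duality of $V_\ell(E)$, are the technical inputs that make this bookkeeping tractable.
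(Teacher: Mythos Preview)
The paper does not give its own proof of this proposition: it is simply quoted, with attribution, as \cite[Proposition~2]{Rohr}, and then used as a black box throughout Sections~\ref{localrootnumber}--\ref{proof}. So there is no ``paper's proof'' to compare with.

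That said, your sketch is essentially Rohrlich's original argument: split into the good, multiplicative, and potentially good additive cases, identify the Weil--Deligne representation (unramified, special twisted by an unramified quadratic character, or tame with cyclic inertia image of order $2,3,4,6$), and reduce the epsilon factor to a quadratic, cubic, quartic, or sextic Gauss sum whose evaluation yields the listed Jacobi symbol. Your handling of $I_m$ via $-c_6=864b=4\cdot 6^3\,b$ is correct and gives the split/nonsplit criterion $\bigl(\tfrac{6b}{p}\bigr)=\pm 1$.

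One correction worth noting: for Kodaira type $I_m^{*}$ with $m\geq 1$ the curve has potentially \emph{multiplicative} (not potentially good) reduction, so $\sigma'_p(E)$ is not the induction of a tame character but rather $\mathrm{sp}(2)\otimes\eta$ with $\eta$ the \emph{ramified} quadratic character of $\Q_p^\times$. The epsilon-factor computation then uses the quadratic Gauss sum attached to $\eta$ and still gives $\bigl(\tfrac{-1}{p}\bigr)$, so your conclusion is unaffected; only the description of the representation needs adjusting. With that fixed, your outline is a faithful summary of Rohrlich's proof.
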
 

However, it is not as simple when $p$ is $2$ or $3$.
According to \cite[1.1]{Rizz}, to determine the local root number at $p=2,3$ of an elliptic curve, we must find the smallest vector $(\alpha,\beta,\gamma)$ with nonnegative entries such that $$(\alpha,\beta,\delta)=(v_p(c_4),v_p(c_6),v_p(\Delta))+k(4,6,12)$$
for $k\in\Z$, where $c_4,c_6$ and $\Delta$ are the usual quantities associated to a Weierstrass equation (as in \cite[Chap. III]{Silv1}).

Let $E:y^2=x^3+ax+b$ be an elliptic curve. 
For every $t\in\Z\setminus\{0\}$, define $E_t$ to be the quadratic twist
\begin{displaymath}
E_t:y^2=x^3+at^2x+bt^3,
\end{displaymath}
with $a,b\in\Z$. The Weierstrass coefficients of the twisted curve are:
$$c_4=-2^4\cdot3\cdot a\cdot t^2\quad c_6=-2^5\cdot 3^3\cdot b\cdot t^3\quad \Delta=-2^4\cdot(4a^3+27b^2)t^6,$$
whence $$(\alpha,\beta,\delta)=(v_p(a),v_p(b),v_p(\Delta))+(2v_p(t),3v_p(t),6v_p(t))+\begin{cases}
(4,5,0)& \text{if $p=2$}\\
(1,3,0)&\text{if $p=3$}
\end{cases}$$

The root number is given by the  entry of Rizzo's Table II (if $p=3$) or Table III (if $p=2$) corresponding to $(\alpha,\beta,\delta)$.

\subsubsection{Periodicity}
\begin{lm} \label{signeinvariant}

For every prime $p$,
 the function $t\mapsto W_p(E_t)$ is periodic. 
\end{lm}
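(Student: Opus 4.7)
My plan is to show that $W_p(E_t)$ is determined by a bounded amount of congruence data on $t$. The underlying fact is that $W_p$ is an invariant of the $\Q_p$-isomorphism class of $E_t$, and two quadratic twists $E_t$ and $E_{t'}$ are $\Q_p$-isomorphic precisely when $t/t' \in (\Q_p^{\times})^2$. Since $\Q_p^{\times}/(\Q_p^{\times})^2$ is a finite group (of order $4$ for $p$ odd and $8$ for $p=2$), the class of $t$ is read off the parity of $v_p(t)$ together with the class of $t/p^{v_p(t)}$ modulo a fixed small power of $p$. On any set where $v_p(t)$ is bounded (in particular on the squarefree integers $\mathscr{F}_2$ that feature in the main theorem) this exhibits genuine periodicity modulo a fixed power of $p$.

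To make this explicit, I would first treat $p \geq 5$. The monodromy lemma of Section \ref{monodromiefacile} pins down the Kodaira type of $E_t$ at $p$ from the parity of $v_p(t)$ alone. Plugging into Proposition \ref{Rohrlichroot}, every Kodaira type except $I_m$ contributes a fixed sign to $W_p$ depending only on this type; and in the $I_m$ case multiplicative reduction forces $v_p(t)=0$, so the contribution $-\left(\tfrac{6bt^{3}}{p}\right)$ depends on $t$ only through its residue modulo $p$. Hence $W_p(E_t)$ factors through the residue of $t$ modulo $p^{2}$ on $\{t:v_p(t)\leq 1\}$, and more generally modulo $p^{k+1}$ on $\{t:v_p(t)\leq k\}$.

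For the exceptional primes $p \in \{2,3\}$, I would feed Rizzo's tables with the triples
\[
(\alpha,\beta,\delta)=(v_p(a),v_p(b),v_p(\Delta_0))+v_p(t)(2,3,6)+\begin{cases}(4,5,0)& p=2,\\ (1,3,0)& p=3,\end{cases}
\]
already derived in the excerpt. The output of the tables is determined by $(\alpha,\beta,\delta)$ together with the residues of the unit parts of $c_4$, $c_6$ and $\Delta$ modulo a fixed power of $p$. Pulling these back through $c_4=-48at^{2}$, $c_6=-864bt^{3}$ and $\Delta=-16(4a^{3}+27b^{2})t^{6}$, each such residue is captured by a congruence on $t$ modulo a fixed power of $p$ as long as $v_p(t)$ is bounded, giving the claimed periodicity.

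The main obstacle is the case $p=2$: the Halberstadt--Rizzo tables have many rows, and depending on how $v_p(t)$ shifts $(\alpha,\beta,\delta)$ one may land in a row whose root number formula requires the unit parts of $c_4$, $c_6$ or $\Delta$ modulo a comparatively high power of $2$. The verification requires no new idea beyond a systematic row-by-row check that these residues are indeed captured by a fixed congruence on $t$; the corresponding check at $p=3$ and the argument for $p\geq 5$ are routine.
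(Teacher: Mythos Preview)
Your argument is correct and, in its explicit half (the case split for $p\geq 5$ via Proposition~\ref{Rohrlichroot}, and the appeal to the Halberstadt--Rizzo tables for $p=2,3$), it follows the paper's own proof closely. The conceptual opening via $\Q_p^\times/(\Q_p^\times)^2$ is, however, a genuine addition: the paper never invokes the $\Q_p$-isomorphism class of $E_t$ directly and instead argues entirely through Tate's algorithm and the tables, whereas your observation that $W_p(E_t)$ factors through the square class of $t$ in $\Q_p^\times$ already settles the lemma once one restricts to squarefree $t$ (where $v_p(t)\leq 1$). One small slip worth noting: your claim that ``in the $I_m$ case multiplicative reduction forces $v_p(t)=0$'' is not quite right, since if $E$ itself has type $I_m^*$ at $p$ and $v_p(t)=1$ then $E_t$ acquires type $I_m$; but even in that situation the relevant Legendre symbol depends only on $(t/p)\bmod p$, hence on $t\bmod p^2$, so the periodicity conclusion is unaffected and your square-class argument covers it anyway.
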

\begin{proof} Let $t$ be an integer, and let $t=t_0 t_{(\Delta)}$ be a decomposition such that $t_0$ is a product of prime factors of $\Delta$ (the discriminant of $E$) and $t_{(\Delta)}$ is an integer coprime with $\Delta$. Let $\Delta_{E_t}=\Delta t^6$ be the discriminant of $E_t$. 
By Tate's algorithm, the reduction at $p$ depends only on the triple $(v_p(c_4),v_p(c_6),v_p(\Delta t^6))$.

If $p\geq5$ and $v_p( t)=2$, we have $v_p(\Delta_{E_t})=v_p(\Delta)+12v_p(t_{(\Delta)})$. Since $p\nmid t_{(\Delta)}$, then $v_p(\Delta_{E_t})=v_p(\Delta)$. Yet, we know that the root number of a curve stays the same under a twist by a twelfth-power. Knowing $t$ modulo $p^2$ suffices thus to know $W_p(E_t)$.

For $p=2,3$ and $l=4,6$, let $c_{l,p}$ be the integers such that $c_l=p^{v_p(t)}c_{l,p}$. The formulae found in the tables in \cite{Halb} depend only of the 2-adic and 3-adic valuation of $c_4$ and $c_6$ as well as the remainder of $c_{4,2}$, $c_{4,3}$, $c_{6,2}$, $c_{6,3}$ modulo a certain power of 2 or 3.
\end{proof}

\subsubsection{Variation of the local root number at $p\geq5$ when twisting}
Let $p\not=2,3$ be a prime number, and $t\in\Z\setminus\{0\}$ a squarefree integer. 
In the following, we compare $W_p(E)$ and $W_p(E_{t})$.

\begin{propo}\label{variationquadratique}
Put $D_p\in\{-1,+1\}$ the integer such that $W_p(E)=D_pW_p(E_{t})$.
 We have
\begin{enumerate}
\item if $p\nmid t$
$$D_p=\begin{cases}
+1 & \text{if $E$ has good or additive reduction}\\
\left(\frac{t}{p}\right)&\text{if $E$ has multiplicative reduction}
\end{cases}$$
\item if $p\mid t$
$$D_p=\begin{cases}
+1 & \text{if $E$ has type $III$ or $III^*$}\\
\left(\frac{-1}{p}\right)&\text{if $E$ has type $I_0$ or $I_0^*$}\\
\left(\frac{3}{p}\right)&\text{if $E$ has type $II,II^*,IV,IV^*$}\\
-\left(\frac{-6b_{(p)}t_{(p)}}{p}\right)&\text{if $E$ has type $I_m$ or $I_m^*$ ($m\geq1$)}
\end{cases}$$
\end{enumerate}
\end{propo}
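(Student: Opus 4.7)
The plan is a direct case analysis combining two inputs: Proposition \ref{Rohrlichroot}, which tabulates $W_p$ by Kodaira type at $p\geq 5$, and the monodromy lemma of the previous subsection, which ties the Kodaira types of $E$ and $E_t$ at $p$ to the parity of $v_p(t)$. Since $t$ is squarefree, only $v_p(t)\in\{0,1\}$ occur, matching exactly the dichotomy in the statement. The scheme is: read off the Kodaira types from the parity of $v_p(t)$, plug each type into Rohrlich's table, and form the ratio $W_p(E)/W_p(E_t)$, simplifying via multiplicativity of the Jacobi symbol.

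The case $p\nmid t$ is quickest: here $v_p(t)=0$ is even, so $E$ and $E_t$ share their Kodaira type at $p$. For good reduction and for every additive type the Rohrlich formula depends only on the symbol, not on the coefficients, so $W_p(E)=W_p(E_t)$ and $D_p=1$. For the multiplicative type $I_m$, the formula $W_p(E)=-\left(\frac{6b}{p}\right)$ is twisted to $W_p(E_t)=-\left(\frac{6bt^3}{p}\right)$; using $\left(\frac{t^2}{p}\right)=1$ this collapses to $D_p=\left(\frac{t}{p}\right)$.

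For $p\mid t$ one has $v_p(t)=1$ odd, and the monodromy lemma pairs the Kodaira types as $(I_0,I_0^*)$, $(I_m,I_m^*)$, $(II,IV^*)$, $(II^*,IV)$, $(III,III^*)$. The non-multiplicative pairs are one-line verifications from Rohrlich's table: $(I_0,I_0^*)$ compares $1$ with $\left(\frac{-1}{p}\right)$, giving $D_p=\left(\frac{-1}{p}\right)$; the pairs $(II,IV^*)$ and $(II^*,IV)$ each compare $\left(\frac{-1}{p}\right)$ with $\left(\frac{-3}{p}\right)$, giving $D_p=\left(\frac{-1}{p}\right)\left(\frac{-3}{p}\right)=\left(\frac{3}{p}\right)$; and in $(III,III^*)$ both root numbers equal $\left(\frac{-2}{p}\right)$, so $D_p=1$.

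The only delicate case is $(I_m,I_m^*)$, and this is the main obstacle. Here I would track the effect of the twist on the minimal Weierstrass model: starting from $y^2=x^3+ax+b$, the twist reads $y^2=x^3+at^2x+bt^3$, and whichever of the two Kodaira types $E$ carries, one of the two sub-cases is already minimal at $p$ while the other needs the change of variables $x\mapsto p^2x$, $y\mapsto p^3y$ to restore minimality. In both sub-cases the unit part of the resulting $b$-coefficient equals $b_{(p)}t_{(p)}^{3}$, which is $b_{(p)}t_{(p)}$ modulo squares, the factor $t_{(p)}^2$ being absorbed into the Jacobi symbol. Reassembling $W_p(E)$ and $W_p(E_t)$ via the coefficient-dependent form of Rohrlich's formula for $I_m^*$ and simplifying then produces the symmetric expression $D_p=-\left(\frac{-6b_{(p)}t_{(p)}}{p}\right)$. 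The bookkeeping here---keeping track of the minimalisation, handling the $c_6$-dependence of the $I_m^*$ formula, and checking that the sub-cases $I_m\to I_m^*$ and $I_m^*\to I_m$ yield the same symmetric answer---is the step that needs the most care.
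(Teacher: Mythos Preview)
Your proposal is correct and follows essentially the same route as the paper: a case-by-case analysis using the monodromy lemma to pair Kodaira types under the twist, then reading off $D_p$ from Rohrlich's table (Proposition~\ref{Rohrlichroot}) and simplifying via multiplicativity of the Jacobi symbol. The paper's own proof is in fact terser on the $(I_m,I_m^*)$ case---it only works out one direction explicitly---so your flagged care about the minimalisation and the symmetry check is well placed (one small slip: it is the $I_m$ formula, not the $I_m^*$ one, that is coefficient-dependent).
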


\begin{proof}




The value of $D_p$ depends of the type of reduction at $p$ of $E$. 

\begin{enumerate}
\item If $p\nmid t$ and the reduction of $E$ is not multiplicative, one has $D_p=+1$. If the reduction has type $I_m$, then $$D_p=\Big(\frac{t}{p}\Big),$$ because
\begin{displaymath}
W_p(E_{t})=-\Big(\frac{-6b_{(p)}t^3}{p}\Big)
\end{displaymath}
\begin{displaymath}
= -\Big(\frac{-6b_{(p)}}{p}\Big)\Big(\frac{t}{p}\Big)
\end{displaymath}
\begin{displaymath}
=W_p(E_{t})\Big(\frac{t}{p}\Big).
\end{displaymath}

\item If $p\mid t$, one of the following cases occurs, according to the type of variation of $E$ at $p$.

\begin{enumerate}
\item If $E$ has type $I_0$, then $E_{t}$ has type $I_0^*$. Conversely, if $E_t$ has type $I_0^*$, then $E_{pt}$ has type $I_0$. The local root number at $p$ changes from $(\frac{-1}{p})$ to $+1$. We have $D_p=(\frac{-1}{p})$. 
\item If $E$ has type $II$, then $E_{t}$ has type $IV^*$. Conversely, if $E_t$ has type $IV^*$, then $E_{pt}$ has type $II$. The local root number at $p$ changes from $(\frac{-1}{p})$ to $(\frac{-3}{p})$. We have $D_p=(\frac{3}{p})$.
\item If $E$ has type $III$, then $E_{t}$ has type $III^*$ and conversely. The local root number at $p$ changes from $(\frac{-2}{p})$ to $(\frac{-2}{p})$. We have $D_p=+1$.
\item If $E$ has type $I_m^*$, then $E_{t}$ has type $I_m$ and conversely. The local root number at $p$ changes from $(\frac{-1}{p})$ to $-\left(\frac{-6b_{(p)} t_{(p)} }{p}\right)$. We have $D_p=-\left(\frac{6b_{(p)} t_{(p)}}{p}\right)$.
\end{enumerate}

\begin{center}
   \begin{tabular}{ c c}
     \hline
$I_0^* \leftrightarrow I_0$ & $II \leftrightarrow IV^*$  \\
$(\frac{-1}{p})\xrightarrow{D_p=(\frac{-1}{p})} +1$ & $(\frac{-1}{p})\xrightarrow{D_p=\Big(\frac{3}{p}\Big)} (\frac{-3}{p})$ \\
&\\
$III \leftrightarrow III^*$ & $I_m^* \leftrightarrow I_m$  \\ 
$(\frac{-2}{p})\xrightarrow{D_p=+1} (\frac{-1}{p})$ & $(\frac{-1}{p})\xrightarrow{D_p=-\left(\frac{6b_{(p)}t_{(p)} }{p}\right)} -\left(\frac{-6b_{(p)}t_{(p)} }{p}\right).$\\
   \hline
\end{tabular}
 \end{center} 
\end{enumerate}



\end{proof}

\subsubsection{Local root number at $p=2$}\label{signelocauxen23}


\begin{lm}\label{signe2special}

We have $W_2(E_t)=\epsilon_2\cdot\mathrm{sgn}(t)\left(\frac{-1}{\vert t_{(2)}\vert}\right)$ (for a fixed $\epsilon_2\in\{-1,+1\}$) for all $t\in\Z$ if and only if the triple $(v_2(a)+4,v_2(b)+5,v_2(\Delta))$ is among the following:
\begin{enumerate}
\item $(0,0,0)$ or $(2,3,6)$ (then $\epsilon_2\equiv -b_{(2)}\mod4$); or
\item $(3,5,3)$ or $(5,8,9)$ and 
\begin{enumerate}
\item $a_{(2)}\equiv 3\mod8$ and $b_{(2)}\equiv3\mod4$; or $a_{(2)}\equiv 7\mod8$ and $b_{(2)}\equiv1\mod4$ (then $\epsilon_2=-1$)
\item $a_{(2)}\equiv 3\mod8$ and $b_{(2)}\equiv1\mod4$; or $a_{(2)}\equiv 7\mod8$ and $b_{(2)}\equiv3\mod4$ (then $\epsilon_2=+1$)
\end{enumerate}
\item $(\geq4,3,0)$ or $(\geq6,6,6)$ (then $\epsilon_2\equiv b_{(2)}\mod4$).
\end{enumerate}
\end{lm}

We deduce the following:

\begin{coro}
The cases listed in Theorem \ref{signe2special} are the only one such that for every $t\in\Z\setminus\{0\}$, one has $$W(E_t)=\mathrm{sgn}(t)\left(\frac{-1}{\vert t_{(2)}\vert}\right)W(E),$$
where $D_2$ is the integer depending on $t$ such that $W_2(E_t)=D_2W_2(E)$. In particular, we have $D_2=\epsilon_2\cdot\mathrm{sgn}(t)\left(\frac{-1}{\vert t_{(2)}\vert}\right)$  and so the function $w_2$ defined in Corollary \ref{decompositiontwist} is constant and $w_2(t)=\epsilon_2$.
 \end{coro}

\begin{proof}
Let $E:y^2=x^3+ax+b$ be an elliptic curve and let $\Delta$ be its discriminant. For every squarefree $t\in\Z\setminus\{0\}$, consider the twist of $E$ by $t$, $E_t:ty^2=x^3+ax+b$. 

We list here the conditions of the coefficients $a$, $b$ and $\Delta$ for the root number at 2 of the fibres $E_t$ to be $W_2(t)=\mathrm{sgn}(t)\left(\frac{-1}{ t_{(2)}}\right)$  for all positive $t$ and $-\left(\frac{-1}{-t_{(2)}}\right)$ for all negative $t$.
We can classify the curves $E$ by their triple $(v_2(a)+4,v_3(b)+5,v_2(\Delta))$: a formula for their local root number at 2 is given by the table of Rizzo \cite[Table III]{Rizz}. (In Rizzo, the notation with the $c$-invariants is preferred. Note that $(c_6)_{(2)}=3b_{(2)}$ and that $(c_4)_{(2)}=3^3a_{(2)}$.). 
The first step is to select the surfaces such that $W_2(E_t)=\epsilon\cdot\mathrm{sgn}(t)\left(\frac{-1}{\mid t\mid}\right)$ when $2\nmid t$, for a fixed $\epsilon\in\{\pm1\}$.
To find the triples and determine the additional conditions, we proceed in the following way. (We only give three examples here, the other cases being treated in a similar manner.)

If $(v_2(a)+4,v_2(b)+5,v_2(\Delta))=(0,0,0)$, then Rizzo's table gives the following formula for the root number of the fibres in odd $t$:
$$W_2(E_t)=\begin{cases}
+1 & \text{if } b_{(2)}t^3\equiv1\mod4\\
-1 & \text{otherwise.}
\end{cases}$$
We want $W_2(E_t)=+1\Leftrightarrow t\equiv1\mod4$, which is possible if and only if $b_{(2)}\equiv1\mod4$.


For some triples though, the local root number at $2$ of $E_t$ does not behave the way we want. For instance in the case of the triple $(v_2(a)+4,v_2(b)+5,v_2(\Delta))=(3,3,0)$. In this case, the local root number is 
$$W_2(E_t)=\begin{cases}
+1 & \text{if }a_{(2)}t^2\equiv3\mod4\text{ and if } b_{(2)}t\equiv\pm3\mod8,\\
 & \text{if }a_{(2)}t^2\equiv1\mod4\text{ and if } b_{(2)}t\equiv1,3\mod8,\\
-1 & \text{otherwise.}
\end{cases}
$$
In any case of $a_{(2)}\mod4$ and $b_{(2)}\mod8$, there will be values of $t\mod8$ such that $W_2(E_t)\not=\mathrm{sgn}(t)\left(\frac{-1}{\vert t\vert}\right)$. For instance, if $b_{(2)}\equiv3\mod4$ and $a_{(2)}\equiv1\mod16$, then $W_2(E_t)=+1$ if and only if $t\equiv5\mod8$.

Proceding in a similar manner for all the other triples,
we determine the triple, and the special conditions for which $t\mapsto W_2(E_t)$ behaves like $\mathrm{sgn}(t)\left(\frac{-1}{\vert t_{(2)}\vert}\right)$. We obtain the list given in Table \ref{liste1p2}. 
We have double checked every computation with the computation software MAGMA. 

\begin{center}
\begin{table}
\begin{center}
\begin{tabular}{| c | c | c |}
\hline
$(v_2(a)+4,v_2(b)+5,v_2(\Delta))$ & Special conditions & $\epsilon$, such that 
\\
(of a minimal model) &&$W_2=\epsilon\cdot \mathrm{sgn}(t)\left(\frac{-1}{\vert t\vert}\right)$
\\

\hline
$(0,0,0)$& $b_{(2)}\equiv1\mod4$ & $+1$\\ 
&$b_{(2)}\equiv3\mod4$&$-1$\\\hline

$(\geq4,3,0)$& $b_{(2)}\equiv1\mod4$,&$+1$\\ 
&$b_{(2)}\equiv3\mod4$&$-1$\\\hline
$(2,3,\geq4)$& $b_{(2)}\equiv1\mod4$,&$-1$\\ 
& $b_{(2)}\equiv3\mod4$,&+1\\ \hline
$(\geq4,4,2)$& $b_{(2)}\equiv1\mod4$,&$+1$\\ 
& $b_{(2)}\equiv3\mod4$,&$-1$\\\hline
$(3,5,3)$&
	$a_{(2)}\equiv3,5\mod8$ and $b_{(2)}\equiv3\mod4$;&$-1$\\
&$a_{(2)}\equiv1,7\mod16$ and $b_{(2)}\equiv1\mod4$,&\\ 
&	$a_{(2)}\equiv3,5\mod8$ and $b_{(2)}\equiv1\mod4$;&$+1$\\
&$a_{(2)}\equiv1,7\mod16$ and $b_{(2)}\equiv3\mod4$,&\\ 
\hline
$(5,8,9)$&
	$a_{(2)}\equiv5,7\mod8$ and $b_{(2)}\equiv3\mod4$;&$+1$\\
&$a_{(2)}\equiv1,3\mod16$ and $b_{(2)}\equiv1\mod4$,&\\ 
&	$a_{(2)}\equiv5,7\mod8$ and $b_{(2)}\equiv1\mod4$;&$-1$\\
&$a_{(2)}\equiv1,3\mod16$ and $b_{(2)}\equiv3\mod4$,&\\ 
\hline

$(\geq6,6,6)$& $b_{(2)}\equiv1\mod4$,&$+1$\\
& $b_{(2)}\equiv3\mod4$,&$-1$\\ \hline
$(4,6,7)$& $b_{(2)}\equiv3\mod4$ and $a_{(2)}\equiv7\mod8$,&$+1$\\
& $b_{(2)}\equiv1\mod4$ and $a_{(2)}\equiv7\mod8$,&$-1$
\\ \hline
$(\geq7,7,8)$& $b_{(2)}\equiv1\mod4$,&$+1$\\
& $b_{(2)}\equiv3\mod4$,&$-1$\\ \hline
$(6,8,10)$& $a_{(2)}b_{(2)}\equiv3\mod4$ &$+1$\\
& $a_{(2)}b_{(2)}\equiv1\mod4$, &$-1$\\ \hline
$(\geq7,8,10)$& $b_{(2)}\equiv1\mod 4$,&$+1$\\
& $b_{(2)}\equiv3\mod 4$.&$-1$\\
\hline
\end{tabular}
\end{center}
\caption{\label{liste1p2} Cases where $W_2(E_t)= \epsilon\cdot\mathrm{sgn}(t)\cdot\left(\frac{-1}{\vert t\vert}\right)$ for every \textbf{odd} squarefree $t\in\Z-{0}$. 
}
\end{table}
\end{center}

The final step is to retain only the triples such that $W_2(E_{2t})=W_2(E_t)$. For every case of the list, we check if the triple $(v_2(a)+2,v_2(b)+3,v_2(\Delta)+6)$ (the triple associated to a fibre at $2t$ for $t\in\Z$ odd) also gives a local root number at 2 equal to $W_2(\E_{2t})=\epsilon\cdot\mathrm{sgn}(t)\left(\frac{1}{\vert t\vert}\right)$ (for the same $\epsilon$). A list of the monodromy of the triples, as well as whether or not those pairs figure in Table \ref{liste1p2}. This way we find the cases listed in the statement of the theorem (note that for $(3,5,3)$ and $(5,8,9)$ this only works for specific classes of $a_{(2)}$ and $b_{(2)}$).

\begin{table}
\begin{center}
   \begin{tabular}{ | c | c |}
     \hline
    Monodromy of $(v_2(a)+4,v_2(b)+5,v_2(\Delta))$ & Are triples in Table \ref{liste1p2}\\ \hline
$(0,0,0) \longleftrightarrow (2,3,6)$ & yes, yes  \\
$(0,0,>0) \longleftrightarrow (2,3,>6)$ & no, no  \\
$(3,3,0) \longleftrightarrow (5,5,6)$ & no, no  \\
$(\geq4,3,0) \longleftrightarrow (\geq6,6,6)$ & yes, yes  \\
$(2,4,0) \longleftrightarrow (4,7,6)$ & no, no  \\
$(2,\geq5,0) \longleftrightarrow (4,\geq8,6)$ & no, no  \\
$(2,3,1) \longleftrightarrow (4,6,7)$ & no, no  \\
$(2,3,2) \longleftrightarrow (4,6,8)$ & no, no  \\
$(2,3,3) \longleftrightarrow (4,6,7)$ & no, no  \\
$(2,3,\geq4) \longleftrightarrow (4,6,\geq8)$ & yes, no  \\
$(3,4,2) \longleftrightarrow (5,7,8)$ & no, no  \\
$(3,5,3)\longleftrightarrow (5,8,9)$& yes, yes\\
$(4,4,2) \longleftrightarrow (6,7,8)$ & yes, no  \\
$(\geq5,4,2) \longleftrightarrow (\geq7,7,8)$ & yes, no  \\
$(4,5,4) \longleftrightarrow (6,8,10)$ & no, yes  \\
$(\geq5,5,4) \longleftrightarrow (\geq7,8,10)$ & no, yes  \\

   \hline
\end{tabular}
 \end{center}
 \caption{\label{liste2p2} }
\end{table}
\end{proof}

\subsubsection{Local root number at $p=3$}

\begin{lm}\label{signe3special}

We have $W_3(E_t)=\epsilon_3\cdot(-1)^{v_3(t)}$ for all $t\in\Z$ (for a fixed $\epsilon_3=\pm1$) if and only if the triple of values $(v_3(a)+1,v_3(b)+3,v_3(\Delta))$ is one of the following:
\begin{enumerate}
\item $(0,0,0)$,  $(1,\geq3,0)$, and $(1,2,0)$ with $a_{(3)}\equiv2\mod3$ (then $\epsilon_3=+1$)
\item $(2,3,6)$, $(3,\geq6,6)$, and $(3,5,6)$ with $a_{(3)}\equiv2\mod3$ (then $\epsilon_3=-1$)
\end{enumerate}
\end{lm}

\begin{coro}
The cases listed in Theorem \ref{signe3special} are the only ones such that for every $t\in\Z\setminus\{0\}$, one has $$W(E_t)=(-1)^{v_3(t)}W(E).$$ 

In particular, we have $D_3=\epsilon_3\cdot(-1)^{v_3(t)}W(E)$ ($D_3$ is the integer depending on $t$ such that $W_3(E_t)=D_3W_3(E)$) and so the function $w_3$ defined in Example \ref{ex1} or Corollary \ref{decompositiontwist} is constant and $w_3(t)=\epsilon_3$.
\end{coro}

\begin{proof}

Let $t\in\Z\setminus\{0\}$ be an integer.

Suppose that $t$ satisfies $3\nmid t$. Then the local root number at 3 is given by the entry of \cite[Table II]{Rizz} corresponding to $(v_3(a)+1,v_3(b)+3,v_3(\Delta))$.
If $v_3(t)$ is odd, then the triple is $(v_3(a)+3,v_3(b)+6,v_3(\Delta)+6)$ and the corresponding entry gives the local root number. We double-checked every computation using the software MAGMA.

We start by selecting the triples such that $W_3(E_t)=W_3(E)=\epsilon_3$ for all $t\in\Z$ prime to 3 (for some fixed $\epsilon_3\in\{\pm1\}$). The list is given in Table \ref{liste1p3}.

\begin{table}
\begin{center}
\begin{tabular}{ | c | c | c |}
\hline
$(v_3(a)+1,v_3(b)+3,v_3(\Delta)$ & Special conditions & $\epsilon_3$, such that $W_3=\epsilon_3$ \\ \hline
  $(0,0,0)$, &&$+1$\\ 
  \hline
$(1,\geq3,0)$,&& $+1$\\ \hline
$(1,2,0)$,&& $+1$\\ \hline
$(2,\geq5,3)$&& $+1$,\\ \hline
$(2,3,3)$ &if $a_{(3)}\equiv1\mod3$ and $b_{(3)}\equiv4,5\mod9$ &$+1$, \\ \hline
$(2,3,4)$ && $+1$,\\ \hline
$(2,3,6)$ &&$-1$\\ \hline
$(2,3,\geq7)$&&$-1$\\ \hline
$(2,3,4)$&&$+1$\\
$(2,\geq5,3)$&&+1\\ \hline
$(\geq3,3,3)$& $b_{(2)}\equiv1,8\mod9$&$+1$\\ \hline
$(3,5,6)$ & if $a_{(3)}\equiv1\mod3$&$+1$\\ 
&$a_{(3)}\equiv2\mod3$&$-1$\\ \hline
$(3,\geq6,6)$&&$-1$\\ \hline
$(4,6,9)$ & if $a_{(3)}\equiv1\mod3$ and $b_{(3)}\equiv4,5\mod9$ &$+1$,\\  \hline
$(4,\geq8,9)$ &&+1\\ \hline
$(\geq5,6,9)$& $b_{(2)}\equiv1,8\mod9$&$+1$\\ \hline
\end{tabular}
 \end{center}
\caption{\label{liste1p3} Cases where $W_3(E_t)= \epsilon_3$ for every squarefree $t\in\Z\setminus{0}$ not divisible by $3$. 
}
\end{table}


Now, we check among those triples whether they have the property that $W(E_{3t})=-W(E_t)$, by verifying that both the triple of $E$ and the triple of a twist by $3$ are in Table \ref{liste1p3} (these facts are listed in Table \ref{liste2p3}) and comparing the values of $\epsilon_3$. 
We retain only 
$(0,0,0)$, $(2,3,6)$,
 $(1,\geq3,0)$ $(3,\geq6,6)$,
  and finally $(1,2,0)$ and $(3,5,6)$ with the additional condition that $a_{(3)}\equiv2\mod3$. It is important to notice that in the following cases, although the triple $(v_3(a)+1,v_3(b)+3,v_2(\Delta))$ and the triple of a quadratic twist by $3$ are both in Table \ref{liste1p3}, we have $W_3(E_t)=W_3(E_{3t})$ rather than $W(E_t)=-W(E_{3t})$ as required:
 $(2,3,3)$ and $(4,6,9)$ with the additional condition that $a_{(3)}\equiv1\mod3$ and $b_{(3)}\equiv4,5\mod9$,
 and $(\geq3,3,3)$ and $(\geq5,6,9)$ with the additional condition that $b_{(3)}\equiv\pm1\mod9$.

A yes in the second column of Table \ref{liste2p3} means that the triple is in Table \ref{liste1p3}, but under special conditions. If there is no special condition to take account of, they the table tells the value of the function $W_3(\E_t)$.

\begin{table}
\begin{center} 
   \begin{tabular}{ | c | c |}
     \hline
    Monodromy of $(v_3(a)+1,v_3(b)+3,v_3(\Delta))$ & Are triples in Table \ref{liste1p3}\\ \hline
$(0,0,0) \longleftrightarrow (2,3,6)$ & $+1$, $-1$  \\
$(0,0,>0) \longleftrightarrow (2,3,>6)$ & no, $-1$  \\
$(1,2,0) \longleftrightarrow (3,5,6)$ & $+1$, yes  \\
$(1,\geq3,0) \longleftrightarrow (3,\geq6,6)$ & $+1$, $-1$  \\
$(\geq2,2,1) \longleftrightarrow (\geq4,5,7)$ & no, no  \\
$(2,3,3) \longleftrightarrow (4,6,9)$ & yes, yes  \\ 
$(\geq3,3,3) \longleftrightarrow (\geq5,6,9)$ & yes, yes  \\
$(2,4,3) \longleftrightarrow (4,7,9)$ & no, no  \\. 
$(2,\geq5,3) \longleftrightarrow (4,\geq8,9)$ & $+1$, $+1$  \\
$(2,3,4) \longleftrightarrow (4,6,10)$ & $+1$, no  \\
$(2,3,5) \longleftrightarrow (4,6,11)$ & no, no  \\
$(\geq3,4,5)\longleftrightarrow (\geq5,7,11)$& no, no\\
   \hline
\end{tabular}
 \end{center}
 \caption{\label{liste2p3}}
\end{table}

\end{proof}

\subsection{Root number of a quartic twist}

\begin{lm}\cite[Lemme 4.7]{VA}\label{VAx}

Let $t$ be a non-zero integer and define the elliptic curve $E_t:y^2=x^3+tx$. We denote by $W_2(t)$ and $W_3(t)$ its local root numbers at 2 and 3. Let $t_{(2)}$ 
be the integer such that $t=2^{v_2(t)}t_{(2)}$. 
 Then
\begin{displaymath}
W_2(t)= 
\begin{cases}
 -1 & \text{if }v_2(t)\equiv1\text{ or } 3\mod4 \text{ and }t_{(2)}\equiv1\text{ or }3\mod8; \\
  & \text{or if }v_2(t)\equiv0\mod4 \text{ and } t_{(2)}\equiv1,5,9,11,13\text{ or }15\mod16;\\
   & \text{or if }v_2(t)\equiv2\mod4 \text{ and }t_{(2)}\equiv1,3,5,7,11,\text{ or }15\mod16;\\
 +1 & \text{otherwise.}
\end{cases}
\end{displaymath}

\begin{displaymath}
W_3(t)=
\begin{cases}
 -1 & \text{if }v_3(t)\equiv2\mod4; \\
 +1 & \text{otherwise.}
\end{cases}
\end{displaymath}
\end{lm}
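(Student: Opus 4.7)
The plan is to apply Rizzo's tables \cite{Rizz} directly to the Weierstrass equation $y^2 = x^3 + tx$ of $E_t$. The periodicity results already established in this section reduce the task to identifying, on each relevant residue class, the sign of $W_p(E_t)$; the substance of the lemma is simply to read off those signs.

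First, I would normalise. The substitution $(x,y) \mapsto (u^2 x, u^3 y)$ yields $E_{t u^4} \cong E_t$ for every $u \in \Q^\times$, hence $W_p(E_{t u^4}) = W_p(E_t)$, and one may assume $0 \le v_p(t) \le 3$ when computing $W_p(E_t)$. For $E_t$ a direct computation gives the invariants
\[
c_4 = -48\,t,\qquad c_6 = 0,\qquad \Delta = -64\,t^3,
\]
so that $(v_p(c_4), v_p(c_6), v_p(\Delta)) = (v_p(-48) + v_p(t),\ \infty,\ v_p(-64) + 3 v_p(t))$; after a possible $(4,6,12)$-shift to land on a minimal model, this is the input to Rizzo's table.

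Next I would treat $p = 3$. Here $(v_3(c_4), v_3(c_6), v_3(\Delta)) = (1 + v_3(t),\ \infty,\ 3 v_3(t))$; across the four residue classes of $v_3(t)$ modulo $4$, the rows of \cite[Table II]{Rizz} compatible with $c_6 = 0$ give a root number that depends only on $v_3(t) \bmod 4$, and a direct inspection shows that $W_3(E_t) = -1$ precisely when $v_3(t) \equiv 2 \pmod 4$, as claimed.

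For $p = 2$, which is the core of the proof, I would split on $v_2(t) \bmod 4$, producing four minimal triplets $(v_2(c_4), v_2(c_6), v_2(\Delta))$. In each case \cite[Table III]{Rizz} expresses $W_2(E_t)$ as a function of $(c_4)_{(2)} = -3\,t_2$ modulo a small power of $2$, equivalently of $t_2$ modulo $16$; tabulating these values reproduces the residue sets announced in the lemma. The main obstacle is precisely this $p = 2$ bookkeeping: Rizzo's table subdivides each triplet into several subcases indexed by finer residues of $c_4$ (and possibly $c_6$), and one must verify methodically that the set of $t_2$ residues producing $W_2 = -1$ is exactly $\{1,3\}\bmod 8$ in the odd-valuation cases, $\{1,5,9,11,13,15\}\bmod 16$ when $v_2(t)\equiv 0\bmod 4$, and $\{1,3,5,7,11,15\}\bmod 16$ when $v_2(t)\equiv 2\bmod 4$. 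Since the argument overlaps substantially with \cite[Lemme 4.7]{VA}, I would refer to that computation for the remaining tabular verifications.
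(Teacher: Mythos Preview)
The paper does not supply its own proof of this lemma: it is quoted verbatim from \cite[Lemme~4.7]{VA} and used as a black box, with the subsequent Lemmas~\ref{j1728p2} and~\ref{j1728p3} merely restating it in the language of periodicity. Your outline---compute $c_4=-48t$, $c_6=0$, $\Delta=-64t^3$, reduce $v_p(t)$ modulo $4$, and look up the resulting minimal triplets in Rizzo's Tables~II and~III---is the correct way to verify the statement and is exactly the method V\'arilly-Alvarado uses; since you ultimately defer the final $p=2$ bookkeeping to \cite{VA} as well, your treatment is consistent with (indeed slightly more detailed than) the paper's.
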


We may reformulate this theorem as follows:

\begin{lm}\label{j1728p2} The function of the local root number at $2$ on the fourth-powerfree integers, 
defined as $t\mapsto W_2(\E_t)$, is $2^4$-periodic on $\F_4$ and it takes the value $-1$ if and only if $t\in[2^ka]$ the congruence class mod $2^4$, where $k=1,3\mod4$ and $a\equiv1,3\mod8$; or $k=0$ and $a\equiv1,5,9,11,13,15\mod16$; or $k=2$ and $a\equiv1,3,5,7,11,15\mod16$.
\end{lm}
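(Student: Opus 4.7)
My plan is to observe that Lemma \ref{j1728p2} is essentially a cosmetic reformulation of the first (i.e.\ $W_2$) half of Lemma \ref{VAx}, specialised to the set $\F_4$ of $4$-th powerfree integers, and phrased in terms of square-congruence classes mod $2^4$ rather than the pair $(v_2(t)\bmod 4, t_2\bmod 2^r)$. So the proof is a bookkeeping translation, not a new calculation: I would not reopen Rizzo's tables.

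The concrete steps would be the following. First, since $t\in\F_4$ one has $v_2(t)\in\{0,1,2,3\}$, so the congruences ``$v_2(t)\equiv k\bmod 4$'' that appear in Lemma \ref{VAx} collapse to equalities $v_2(t)=k$. Next, for each $k\in\{0,1,2,3\}$ I would read off from Lemma \ref{VAx} the exact residue classes of the odd part $t_2=a$ (modulo $8$ when $k$ is odd, modulo $16$ when $k$ is even) for which $W_2(\E_t)=-1$, and verify case-by-case that these are precisely the three families listed in the statement: $k\in\{1,3\}$ with $a\equiv 1,3\pmod 8$; $k=0$ with $a\equiv 1,5,9,11,13,15\pmod{16}$; and $k=2$ with $a\equiv 1,3,5,7,11,15\pmod{16}$.

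Finally, the $2^4$-periodicity statement on $\F_4$ I would derive from the observation that on each of the four fibers $\{t\in\F_4:v_2(t)=k\}$, knowing $t$ modulo $2^4$ determines $t_2$ modulo $2^{4-k}$ (for $k=0,1$) or else (for $k=2,3$) pins down $a$ within one of the sets above, once combined with the defining condition $v_2(t)=k$ that is itself read off from $t\bmod 2^4$ on $\F_4$. This is best expressed using the paper's notion of square-congruence class $[t]_{sq}\bmod 2^4$, which by definition records both $t\bmod 2^4$ and enough of the odd part to see $a\bmod 2^{4-k}$.

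The main (mild) obstacle is the $k=2$ case, where only $a\bmod 4$ is a priori visible from $t\bmod 16$ alone; here I would check by hand that the six residues $\{1,3,5,7,11,15\}\bmod 16$ are preserved under lifting from $a\bmod 4$ through the square-congruence-class refinement, so that the periodicity claim goes through. The $W_3$ half of Lemma \ref{VAx} is not needed for this statement and would not be invoked.
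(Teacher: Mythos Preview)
Your approach is exactly the paper's: the paper gives no separate proof of Lemma~\ref{j1728p2} at all, introducing it with the single sentence ``We may reformulate this theorem as follows'' immediately after Lemma~\ref{VAx}. So your plan to treat it as a bookkeeping translation of the $W_2$-half of Lemma~\ref{VAx}, restricted to $\F_4$ so that $v_2(t)\in\{0,1,2,3\}$, is precisely what the paper intends.

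One remark: you are right to flag the $k=2$ (and also $k=3$) case as the delicate point for the bare ``$2^4$-periodic'' assertion, since $t\bmod 16$ alone does not determine $t_2$ to the required modulus there. The paper itself is loose on this point, and in fact in the proof of Theorem~\ref{thmB} it reverts to the modulus $2^6$ (``the local root number at $2$ stays thus the same on a congruence class modulo $2^6$''). So your instinct that the periodicity claim in the lemma needs the square-congruence refinement rather than ordinary periodicity mod $2^4$ is well founded; do not expect the six residues $\{1,3,5,7,11,15\}\bmod 16$ to collapse to a condition on $a\bmod 4$, because they do not (e.g.\ $a\equiv 1\bmod 4$ lifts to $1,5,9,13$, of which $9,13$ are excluded).
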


\begin{lm}\label{j1728p3}The function of the local root number at $3$ on the fourth-powerfree integers, 
defined as $t\mapsto W_3(\E_t)$, is $3^4$-periodic on $\F_4$ and it takes the value $-1$ if and only if $t\in[3^2a]$ the congruence class mod $3^4$, where $a\in\Z$.
\end{lm}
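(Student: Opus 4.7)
The plan is to deduce Lemma~\ref{j1728p3} directly from Lemma~\ref{VAx}, which we have just recalled. Reading the statement, the arithmetic content (the modulus $3^2$ and the class $[3^2a]$) makes it clear that the lemma concerns the local root number at $3$ rather than at $2$: the "$2^2$-periodic" and "$W_2$" in the statement should be read as "$3^2$-periodic" and "$W_3$", the body of Lemma~\ref{VAx} for $W_3$ being the only formula whose $3$-adic structure could yield such a periodicity.

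First, I would restrict the formula of Lemma~\ref{VAx} to the domain $\mathscr{F}_4$. That formula says $W_3(\E_t)=-1$ if and only if $v_3(t)\equiv 2\pmod 4$. Since any $t\in\mathscr{F}_4$ has $v_3(t)\in\{0,1,2,3\}$, the unique admissible value satisfying this congruence is $v_3(t)=2$. Hence on $\mathscr{F}_4$ we have $W_3(\E_t)=-1$ exactly when $t=3^2 a$ with $\gcd(a,3)=1$, which is the class $[3^2 a]$ described in the statement.

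Next I would justify the $3^2$-(square-)periodicity on $\mathscr{F}_4$. The condition $v_3(t)=2$ cannot be detected from $t\bmod 3^2$ alone, because $v_3(t)=3$ also forces $t\equiv 0\pmod 9$; this is precisely the kind of degeneracy that the square-congruence classes of Definition~1 are designed to resolve. By definition, $\mathrm{sq}(t)$ is divisible by $3$ if and only if $v_3(t)$ is even, so the pair $(t\bmod 3^2,\,\mathrm{sq}(t)\bmod 3^2)$ separates $v_3(t)=2$ from $v_3(t)=3$ on $\mathscr{F}_4$ and coincides on integers sharing the same value of $v_3$. This yields the $3^2$-square-periodicity and pins down the "$-1$ set" as the single square-congruence class $[3^2 a]_{sq}$ with $\gcd(a,3)=1$.

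There is no real obstacle here: the lemma is essentially a translation of the known formula from V\'arilly-Alvarado, recast via the square-congruence bookkeeping introduced in the notation section. The only subtlety is correctly interpreting the class $[3^2 a]$ in the square-periodic sense, which is exactly what is needed to distinguish $v_3(t)=2$ from $v_3(t)=3$ within $\mathscr{F}_4$.
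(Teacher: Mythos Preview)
Your reading is correct: the lemma is a direct restatement of the $W_3$ part of Lemma~\ref{VAx} restricted to $\mathscr{F}_4$, and the ``$2$'' and ``$W_2$'' in the statement are evident typos for ``$3$'' and ``$W_3$''. The paper itself offers no separate argument---it introduces Lemmas~\ref{j1728p2} and~\ref{j1728p3} with the sentence ``We may reformulate this theorem as follows''---so your derivation from Lemma~\ref{VAx} is exactly the intended route, and your additional remark explaining why \emph{square}-periodicity (rather than ordinary periodicity modulo $3^2$) is needed to separate $v_3(t)=2$ from $v_3(t)=3$ on $\mathscr{F}_4$ is a detail the paper leaves implicit.
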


\subsection{Root number of a sextic twist}
\begin{lm}\label{VA} \cite[Lemme 4.1]{VA}

Let $t$ be a non-zero integer and the elliptic curve $E_t:y^2=x^3+t$. We denote by $W_2(t)$ and $W_3(t)$ its local root numbers at 2 and 3. Put $t_{(2)}$ and $t_{(3)}$ the integers such that $t=2^{v_2(t)}t_{(2)}=3^{v_3(t)}t_{(3)}$. Then
\begin{displaymath}
W_2(t)= 
\begin{cases}
 -1 & \text{if }v_2(t)\equiv0\text{ or } 2\mod6; \\
  & \text{or if }v_2(t)\equiv1,3,4\text{ or }5\mod6\text{ and }t_{(2)}\equiv3\mod4;\\
 +1, & \text{otherwise.}
\end{cases}
\end{displaymath}

\begin{displaymath}
W_3(t)=
\begin{cases}
 -1 & \text{if }v_3(t)\equiv1\text{ or } 2\mod6\text{ and }t_{(3)}\equiv1\mod3; \\
  & \text{or if }v_3(t)\equiv4\text{ or }5\mod6\text{ and }t_{(3)}\equiv2\mod3;\\
  & \text{or if }v_3(t)\equiv0\mod6\text{ and }t_{(3)}\equiv5\text{ or }7\mod9;\\
  & \text{or if }v_3(t)\equiv3\mod6\text{ and }t_{(3)}\equiv2\text{ or }4\mod9;\\
 +1, & \text{otherwise.}
\end{cases}
\end{displaymath}
\end{lm}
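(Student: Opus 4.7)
The plan is to reduce the computation to a small finite number of cases, and then apply Rizzo's explicit tables for the local root number at $p=2$ and $p=3$ (\cite[Tables II and III]{Rizz}), exactly as was done in the quadratic-twist case treated earlier in Section \ref{localrootnumber}.

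First, observe that $E_t \cong E_{u^6 t}$ over $\Q$ for any $u\in\Q^\times$ via $(x,y)\mapsto(u^2 x, u^3 y)$, so $W_p(E_t)=W_p(E_{u^6 t})$. In particular, $W_p(E_t)$ depends only on $v_p(t) \bmod 6$ and on the reduction of $t_{(p)}$ modulo an appropriate small power of $p$. This reduces the verification to six cases for each of $p=2,3$, indexed by $v_p(t) \bmod 6 \in\{0,1,2,3,4,5\}$.

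Second, I compute the Weierstrass invariants of $E_t: y^2=x^3+t$, namely $c_4=0$, $c_6=-2^5\cdot 3^3\cdot t$, $\Delta=-2^4\cdot 3^3\cdot t^2$. For each residue $r$ of $v_p(t)$ modulo $6$ I determine the minimal triplet
$$(\alpha,\beta,\delta) = (v_p(c_4),v_p(c_6),v_p(\Delta)) + k(4,6,12),$$
taking $k$ the smallest integer making the entries nonnegative (with the convention $\alpha=\infty$ since $c_4=0$). Since $c_6$ carries the full $p$-adic information, the triplet is determined by $r$, and one checks that the six residues $r=0,\dots,5$ give six distinct reduction types, as already recorded in the monodromy lemma of Section \ref{monodromiefacile}.

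Third, for each minimal triplet I read from \cite[Table II]{Rizz} (for $p=3$) or \cite[Table III]{Rizz} (for $p=2$) the explicit formula for $W_p$ in terms of the residues of $(c_4)_{(p)}$ and $(c_6)_{(p)}$ modulo a small power of $p$. Because $c_4=0$, only $(c_6)_{(p)}$ contributes, and this equals $-2^5\cdot 3^3 \cdot t_{(p)}$ up to a unit. I then translate the conditions on $(c_6)_{(p)}$ into the conditions on $t_{(p)} \bmod 4, 8, 16$ (at $p=2$) and on $t_{(3)} \bmod 3, 9$ (at $p=3$) appearing in the statement, and collect the resulting signs.

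The main obstacle is bookkeeping: the number and variety of subcases in Rizzo's tables. The $p=3$ part is the more delicate one, because the triplets $(v_3(c_6),v_3(\Delta))$ produced by $r=0$ and $r=3$ both land in table entries where the root number depends on $(c_6)_{(3)} \bmod 9$, and one has to match those refined conditions with the stated residues $t_3 \equiv 5,7 \bmod 9$ (resp.\ $2,4 \bmod 9$) and check compatibility with $W_p(E_t)=W_p(E_{u^6 t})$. Once these two worst cases are treated, the remaining eight subcases are routine applications of Rizzo's formulas.
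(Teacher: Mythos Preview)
The paper does not give its own proof of this lemma: it is quoted verbatim from V\'arilly-Alvarado \cite[Lemme 4.1]{VA} and used as a black box (the subsequent Lemmas \ref{j0p2} and \ref{j0p3} are merely reformulations, also without proof). So there is nothing in the present paper to compare your argument against.

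That said, your plan is exactly the standard way such a statement is established, and it is correct. The reduction to $v_p(t)\bmod 6$ via $E_t\cong E_{u^6t}$ is valid; your invariants $c_4=0$, $c_6=-2^5\cdot 3^3\cdot t$, $\Delta=-2^4\cdot 3^3\cdot t^2$ are right; and feeding the resulting triplets into Rizzo's Tables II and III and translating the congruence conditions on $(c_6)_{(p)}$ back to conditions on $t_{(p)}$ is precisely how V\'arilly-Alvarado proves his Lemme 4.1. One small point of care: since $c_4=0$, in Rizzo's tables you should treat $v_p(c_4)$ as sufficiently large (``$\geq$'' entries) rather than literally $\infty$, and make sure you pick the correct row accordingly; this is the only place where a careless reading of the tables could lead you astray.
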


We may reformulate this theorem as follows:
\begin{lm}\label{j0p2}The function of the local root number at $2$ defined as $t\mapsto W_2(\E_t)$ is $2^6$-periodic on $\F_6$ and it takes the value $+1$ if and only if $t\in[2^ka]$ the congruence class mod $2^6$, where $k\in\{1,3,4,5\}$ and $a\equiv3\mod4$.
\end{lm}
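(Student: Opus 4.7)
The plan is to treat this statement as a direct translation of Lemma~\ref{VA} into the $\F_6$ setting; the work is essentially bookkeeping rather than new content. First, using the isomorphism $\E_{t^6} \cong \E_t$, I would reduce to $t \in \F_6$, for which $v_2(t) \in \{0,1,2,3,4,5\}$, so that $v_2(t)$ coincides with its residue modulo $6$. This flattens the case analysis of Lemma~\ref{VA} into a finite list indexed by $v_2(t)$.

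Next, I would read off Lemma~\ref{VA} case by case. When $v_2(t) \in \{0,2\}$, the formula there forces $W_2(\E_t) = -1$ unconditionally, so these classes can be discarded from the target set. When $v_2(t) \in \{1,3,4,5\}$, the sign is governed entirely by $t_2 \bmod 4$: one of the two values of $t_2 \bmod 4$ gives $-1$ and the other gives $+1$. Writing $t = 2^{k} a$ with $k = v_2(t)$ and $a = t_2$, the classes on which $W_2(\E_t) = +1$ are then exactly those with $k \in \{1,3,4,5\}$ together with $a$ in the surviving residue class modulo $4$, which is the list displayed in the lemma.

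The periodicity claim follows from the observation that both $v_2(t)$ and $t_2 \bmod 4$ are determined by the residue of $t$ modulo a fixed power of $2$. The one point that needs verification is that $2^6$ is indeed large enough as a period: for $v_2(t) \leq 4$ the datum $t_2 \bmod 4$ is recovered from $t$ modulo $2^{v_2(t)+2} \leq 2^6$, but the borderline case $v_2(t) = 5$ a priori requires $t$ modulo $2^7$. I would resolve this either by a direct inspection of the residues mod $2^7$ that reduce to a common class mod $2^6$ on $\F_6$, or by enlarging the stated period if needed. This residue bookkeeping is the only step that requires any care; I do not anticipate any genuine obstacle, as everything else is a mechanical transcription of Lemma~\ref{VA}.
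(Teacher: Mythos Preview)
Your approach is essentially the paper's own: the lemma is presented there without separate proof, merely as a restatement of Lemma~\ref{VA}, and your case-by-case reading is exactly that restatement.

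The concern you raise in the final paragraph is real. On $\F_6$, when $v_2(t)=5$ one has $t=32a$ with $a$ odd, and every such $t$ satisfies $t\equiv 32\pmod{2^6}$ regardless of whether $a\equiv 1$ or $a\equiv 3\pmod 4$; hence $W_2$ is not determined by $t\bmod 2^6$ alone, and the actual period is $2^7$. The paper itself invokes the modulus $2^7 3^7$ (not $2^6 3^6$) in the proof of Theorem~\ref{thmA}, so the exponent $6$ in the lemma is presumably a slip; your instinct to enlarge the period is the correct resolution. You may also wish to note, for the record, that Lemma~\ref{VA} as stated gives $W_2=+1$ precisely when $v_2(t)\in\{1,3,4,5\}$ and $t_2\equiv 1\pmod 4$, so the condition ``$a\equiv 3\pmod 4$'' in the displayed lemma appears to be another misprint for $a\equiv 1\pmod 4$.
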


\begin{lm}\label{j0p3}The function of the local root number at $3$ defined as $t\mapsto W_3(\E_t)$ is $3^6$-periodic on $\F_6$ and it takes the values $-1$ if and only if $t\in[3^ka]$ the congruence class mod $3^6$, where $k\in\{1,2\}$ and $a\equiv1\mod3\mod4$; $k\in\{4,5\}$ and $a\equiv2\mod3\mod4$; $k=0$ and $a\equiv5,7\mod3\mod4$; or $k\in\{1,2\}$ and $a\equiv2,4\mod3\mod4$.
\end{lm}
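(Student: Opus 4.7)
The strategy is simply to reorganize the case analysis of Lemma \ref{VA} in terms of the decomposition $t = 3^k a$ with $\gcd(a,3)=1$, which is legitimate here because $t \in \mathscr{F}_6$ forces $0 \leq k = v_3(t) \leq 5$, so that $k$ is already its own residue modulo $6$ and $a = t_3$. Each of the four clauses defining $W_3(t)=-1$ in Lemma \ref{VA} is of the form ``$v_3(t) \equiv k \pmod 6$ for certain $k$, and $t_3$ lies in certain residues modulo $3$ or modulo $9$''; translating these directly produces the four families of square-congruence classes $[3^k a]$ listed in the statement. I would proceed clause by clause, writing each one as a union of classes $[3^k a]_{sq} \bmod 3^6$, and check that their union is precisely the $-1$-locus.

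For the $3^6$-periodicity claim, I would argue as follows. Suppose $t \equiv t' \pmod{3^6}$ with both in $\mathscr{F}_6$. Then $v_3(t) = v_3(t')$ because $v_3$ of each is at most $5 < 6$, and this common value equals $k$. Moreover $t_3 \equiv t'_3 \pmod{3^{6-k}}$, and since $6-k \geq 1$ always and $6-k \geq 2$ whenever $k \leq 4$, the residue of $t_3$ modulo $3$ (respectively modulo $9$ in the cases $k \in \{0,3\}$ where Lemma \ref{VA} needs the finer information) is preserved. Hence the values of $W_3$ agree.

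The only real bookkeeping is to match up the two wordings: the first clause of Lemma \ref{VA} (``$v_3(t) \equiv 1,2\pmod 6$, $t_3 \equiv 1 \pmod 3$'') gives $k \in \{1,2\}$ with $a \equiv 1 \pmod 3$; the second (``$v_3(t) \equiv 4,5\pmod 6$, $t_3 \equiv 2 \pmod 3$'') gives $k \in \{4,5\}$ with $a \equiv 2 \pmod 3$; the third (``$v_3(t) \equiv 0 \pmod 6$, $t_3 \equiv 5,7 \pmod 9$'') gives $k=0$, $a \equiv 5,7 \pmod 9$; the fourth (``$v_3(t) \equiv 3 \pmod 6$, $t_3 \equiv 2,4 \pmod 9$'') gives $k=3$ with $a\equiv 2,4\pmod 9$ — which explains the presence of the exponents $0$ and $3$ in the list, where one genuinely needs $a$ modulo $9$ rather than modulo $3$.

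There is no serious obstacle here: the proof is a notational translation of a result already established in \cite{VA}. The one point requiring attention is ensuring the $3^6$-periodicity is stated with the right modulus — a smaller modulus would fail precisely in the cases $k \in \{0,3\}$, where the residue of $a$ modulo $9$ (and thus of $t$ modulo $3^{6}$ when $k=3$, or modulo $9$ when $k=0$) is indispensable; the uniform choice of $3^6$ accommodates all six possible exponents at once.
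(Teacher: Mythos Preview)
Your proposal is correct and matches the paper's approach: the paper does not give a separate proof of this lemma at all, presenting it explicitly as a reformulation of Lemma~\ref{VA} (``We may reformulate this theorem as follows''), which is exactly what your translation does. You also correctly spot and repair the typo in the statement --- the fourth clause should read $k=3$ with $a\equiv 2,4\pmod 9$, not $k\in\{1,2\}$ --- and your periodicity argument makes precise what the paper leaves implicit.
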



\section{Proof of the main theorem}\label{proof}

\subsection{Families of quadratic twists}

\begin{thm}\label{thmquadratic} Let $E$ be an elliptic curve such that $j(E)\not=0,1728$.
Define $\mathscr{F}_2$ to be the set of squarefree integers.
Then

\begin{enumerate}
\item the function $t\mapsto W(\E_t)$ is periodic on $\mathscr{F}_2$,
\item the root number $W(\E_t)$ is not constant when $t$ runs through $\mathbb{Z}\setminus \{0\}$.
\end{enumerate}
\end{thm}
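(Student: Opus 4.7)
The plan is to prove the two assertions in turn, using the tools assembled in Sections \ref{monodromiefacile} and \ref{localrootnumber}.

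For the periodicity statement, I would begin from
$$W(E_t) = W_\infty(E_t)\prod_{p<\infty} W_p(E_t) = -\prod_{p<\infty} W_p(E_t)$$
and split the finite product into three groups: the primes $p\in\{2,3\}$, the primes dividing $\Delta_{(6)}$, and the primes $p\geq 5$ coprime to $6\Delta_0$. For the first two groups there are only finitely many primes, and Lemma \ref{signeinvariant} provides at each of them a local period that is a power of $p$. For a prime $p\geq 5$ coprime to $6\Delta_0$ and a squarefree $t$, two cases arise: when $p\nmid t$ the curve $E_t$ has good reduction and $W_p(E_t)=1$; when $p\parallel t$ the lemma of Section \ref{monodromiefacile} shows that $E_t$ has Kodaira type $I_0^*$ at $p$, and Proposition \ref{Rohrlichroot} then gives $W_p(E_t)=\left(\frac{-1}{p}\right)$. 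Collecting these factors yields the formula
$$W(E_t) = -W_2(E_t)\,W_3(E_t)\,\left(\frac{-1}{\vert t_{(6\Delta)}\vert}\right)\prod_{p\mid\Delta_{(6)}}W_p(E_t)$$
that appears as Theorem \ref{introiso}(1)(a). Letting $N$ be $4$ times the least common multiple of the finitely many local periods, and arranging that $p^2\mid N$ for every $p\mid 6\Delta$, ensures that on squarefree integers the value $W(E_t)$ depends only on the residue of $t$ modulo $N$.

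The main obstacle in this step is verifying that the Jacobi factor $\left(\frac{-1}{\vert t_{(6\Delta)}\vert}\right)$ is itself periodic in $t$ after one removes the prime factors in $6\Delta$. This reduces to the observation that for squarefree $t,t'$ with $t\equiv t'\pmod{N}$ and $p^2\mid N$ for every $p\mid 6\Delta$, one has $v_p(t)=v_p(t')$ at each such $p$, whence $\vert t_{(6\Delta)}\vert\equiv \vert t'_{(6\Delta)}\vert\pmod{4}$, and the Jacobi symbol $(\tfrac{-1}{\cdot})$ depends only on that residue modulo $4$.

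For the non-constancy, I would simply invoke Rohrlich's dichotomy recalled in Section \ref{prev} applied to $f(T)=T$: either both sets $W_{\pm}=\{t\in\Q : W(E_t)=\pm 1\}$ are dense in $\R$, or one of them is $\{t<0\}$ and the other $\{t>0\}$. In either alternative both values $\pm 1$ are attained on $\mathbb{Q}^{\times}$, so $W(E_t)$ is not constant there; this already proves (2) regardless of which branch of Lemma \ref{brot} the curve $E$ falls into.
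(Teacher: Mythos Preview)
Your treatment of part (1) follows the paper's line: derive the product formula and combine the finitely many local periods from Lemma~\ref{signeinvariant} with the behaviour of the Jacobi factor. You are in fact more explicit than the paper about the Jacobi factor, and this is exactly where a gap appears. From $t\equiv t'\pmod{N}$ with $p^2\mid N$ for every $p\mid 6\Delta$ and $t,t'$ squarefree, you correctly obtain $v_p(t)=v_p(t')$ and hence $t_{(6\Delta)}\equiv t'_{(6\Delta)}\pmod{4}$. But the passage to absolute values, $\vert t_{(6\Delta)}\vert\equiv\vert t'_{(6\Delta)}\vert\pmod{4}$, fails when $\mathrm{sgn}(t)\ne\mathrm{sgn}(t')$: for odd $n$ one has $\bigl(\tfrac{-1}{|n|}\bigr)=\mathrm{sgn}(n)\cdot(-1)^{(n-1)/2}$, so the Jacobi factor carries a non-periodic $\mathrm{sgn}(t)$, while the remaining factors $W_p(E_t)$ for $p\mid 6\Delta$ genuinely depend only on $t$ modulo a power of $p$. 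Your argument is therefore valid for squarefree $t$ of a fixed sign, but not on all of $\mathscr{F}$ as written. (The paper's own example $E_{1+}:y^2=x^3-91x+182$, whose twists satisfy $W((E_{1+})_t)=\mathrm{sgn}(t)$ on squarefree $t$, already exhibits this sign obstruction; the paper's proof glosses over the same point.)

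For part (2) your route is correct but different from the paper's. You appeal directly to Rohrlich's dichotomy with $f(T)=T$: in either alternative both $W_+$ and $W_-$ are nonempty, so $W(E_t)$ is not constant on $\Q^\times$. The paper instead argues through Lemma~\ref{brot}: if $E$ fails one of conditions (a)--(d), an explicit positive $t$ with $W(E_t)=-W(E)$ is produced; if $E$ satisfies all of (a)--(d), then condition~(b) forces $W_2(E_{-1})=-W_2(E_1)$ while the local factors at $3$ and at $p\mid\Delta_{(6)}$ are unchanged, giving $W(E_{-1})=-W(E_1)$. Your shortcut is quicker but treats Rohrlich's theorem as a black box; the paper's argument is self-contained and simultaneously delivers the finer classification of Lemma~\ref{brot}, which is also what is needed for Theorem~\ref{introiso}(1)(c).
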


The proof of this theorem is based on the following lemma of independent interest.

\begin{lm}\label{brot}
Let $E$ be an elliptic curve.

Suppose that for all positive squarefree integer $t\in\N$ one has $W(E_t)=W(E)$. This happens if and only if the elliptic curve $E$ has the following properties:
\begin{enumerate}[(a)]
\item there is no finite place of multiplicative reduction except possibly at 2 or 3;
\item for all $t$ non-zero squarefree integers one has $W_2(E_t)=W_2(E)\cdot\mathrm{sgn}(t)\cdot\left(\frac{-1}{\vert t_{(2)}\vert}\right)$;
\item for all $t$ non-zero squarefree integers one has $W_3(E_t)=W_3(E)\cdot(-1)^{v_3(t)}$;
\item if there exists $p\mid\Delta_{(6)}$ then:
\begin{enumerate}[i.]\item if the reduction of $E$ at $p$ has type $III$ or $III^*$, then $p\equiv1\mod4$;\item if the reduction of $E$ at $p$ has type $II$, $II^*$, $IV$ or $IV^*$, then $p\equiv1\mod6$;\item the reduction at $p$ is not additive potentially multiplicative. \end{enumerate}
\end{enumerate}
\end{lm}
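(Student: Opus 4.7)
The starting point is the master formula of Theorem \ref{introiso}(1)(a),
\[
W(E_t) = -W_2(E_t)W_3(E_t)\left(\tfrac{-1}{|t_{(6\Delta)}|}\right)\prod_{p\mid\Delta_{(6)}}W_p(E_t),
\]
reading the hypothesis ``$W(E_t)=+1$ for every non-zero squarefree $t$'' as constancy on a fixed sign class. The strategy is to run $t$ through Dirichlet arithmetic progressions, isolate the contribution of one prime at a time, and translate the resulting local rigidity into the listed conditions on $E$. The first move establishes (a) and (d.iii) simultaneously: if $E$ has multiplicative reduction at some $p\ge 5$, Proposition \ref{variationquadratique} gives $W_p(E_t)/W_p(E)=\left(\tfrac{t}{p}\right)$ for $p\nmid t$, so Dirichlet yields squarefree $t,t'$ of the same sign, in the same class modulo the remaining primes of $6\Delta$, but with opposite Legendre symbol modulo $p$; every other factor in the master formula being unchanged, this contradicts constancy. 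The same Dirichlet trick, now applied to Rohrlich's formula $W_p(E_{pt})=-\left(\tfrac{-6b_{(p)}t_{(p)}}{p}\right)$ in the $I_m$ case, excludes type $I_m^*$ at $p\ge 5$.

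With (a) and (d.iii) in hand I restrict to $t$ coprime to $6\Delta$. Then each $W_p(E_t)$ with $p\mid\Delta_{(6)}$ coincides with $W_p(E)$ (additive case of Proposition \ref{variationquadratique}, where $D_p=+1$), and the formula collapses to
\[
W(E_t)= C_0\cdot W_2(E_t)W_3(E_t)\left(\tfrac{-1}{|t|}\right)
\]
for an absolute constant $C_0$. By Lemma \ref{signeinvariant}, $W_2(E_t)$ is determined by $\mathrm{sgn}(t)$ and $t$ modulo a fixed power of $2$, and $W_3(E_t)$ by $t$ modulo a fixed power of $3$. Choosing $t$ to vary independently in these two moduli via Dirichlet, constancy of the above product forces $W_2(E_t)=\epsilon_2\,\mathrm{sgn}(t)\left(\tfrac{-1}{|t_{(2)}|}\right)$ and $W_3(E_t)=\epsilon_3(-1)^{v_3(t)}$, which by Lemmas \ref{signe2special} and \ref{signe3special} are exactly (b) and (c).

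Finally I obtain (d.i) and (d.ii) by comparing $W(E)$ with $W(E_p)$ for each bad prime $p\ge 5$. The Jacobi factor is unchanged (since $p\mid\Delta$) and the local root numbers at the other bad primes are unchanged too. By (b) and (c), $W_2(E_p)W_3(E_p)/(W_2(E)W_3(E))=\left(\tfrac{-1}{p}\right)$; by Proposition \ref{variationquadratique}, $W_p(E_p)/W_p(E)$ equals $D_p=+1$ for types $III,III^*$ and $D_p=\left(\tfrac{3}{p}\right)$ for types $II,II^*,IV,IV^*$. The global ratios are therefore $\left(\tfrac{-1}{p}\right)$ and $\left(\tfrac{-3}{p}\right)$, and constancy forces $p\equiv 1\pmod 4$, respectively $p\equiv 1\pmod 6$. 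The main obstacle is the apparent circularity between (b), (c) and (d.i), (d.ii): the resolution is to prove (b), (c) only on the subfamily $\gcd(t,6\Delta)=1$ (which is rich enough for the Dirichlet argument to go through), and only then inject $t=p$ to read off (d). A secondary subtlety is the bookkeeping relating $\left(\tfrac{-1}{|t|}\right)$, $\mathrm{sgn}(t)$ and $\left(\tfrac{-1}{|t_{(2)}|}\right)$ so that the conclusion matches exactly the form of Lemma \ref{signe2special}.
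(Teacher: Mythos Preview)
Your proposal is correct and follows essentially the same route as the paper. Both arguments isolate one prime at a time via the Chinese Remainder Theorem/Dirichlet, invoke Proposition~\ref{variationquadratique} for the local variation $D_p$ at $p\ge 5$, and defer to Lemmas~\ref{signe2special} and~\ref{signe3special} for the behaviour at $2$ and $3$. The only organisational difference is that you treat (a) and (d.iii) together at the outset and then derive (b),(c) on the coprime-to-$6\Delta$ subfamily before injecting $t=p$ for (d.i),(d.ii), whereas the paper first introduces the decomposition $C=C_2C_3C_MC_0C_\Delta$ and proceeds through (a), (b,c), (d) in order; the actual computations and the key lemmas used are identical.
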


\begin{rem}
The elliptic curves $E$ with property (b) or (c) are respectively given by Lemma \ref{signe2special} or \ref{signe3special}.
\end{rem}

\begin{ex}
Let $E_{+1}:y^2=x^3-91x+182$ and $E_{-1}:y^2=x^3-91-182$. For any $t\in\N$, the quadratic twist of $E_{+1}$ by $t$ has root number $+1$ and the quadratic twist of $E_{-1}$ has root number $-1$. Indeed, the discriminant of these elliptic curves is $\Delta=2^{12}7^213^2$. So we have:
\begin{enumerate}[(a)] \item The only finite place of multiplicative reduction is in $2$.
\item By Lemma \ref{signe2special}, $W_2(E_t)=W_2(E)\cdot\mathrm{sgn}(t)\left(\frac{-1}{\vert t_{(2)}\vert}\right)$ because the associated triple of a minimal model of $E_t$ is $(v_2(c_4),v_2(c_6),v_2(\Delta))=(0,0,0)$.
\item By Lemma \ref{signe3special}, $W_3(E_t)=W_3(E)\cdot(-1)^{v_3(t)}$ because the associated triple of a minimal model of $E_t$ is $(v_2(c_4),v_2(c_6),v_2(\Delta))=(1,3,0)$ and $a_{(3)}\equiv2\mod3$.
\item The only prime dividing $\Delta_{(6)}$ are $7$ and $13$, for both of them the reduction has type $II$ and that $p\equiv1\mod6$
\end{enumerate}

\end{ex}

\begin{proof}[Proof of Theorem \ref{thmquadratic}.]
Let $E_t$ be the elliptic curve obtained by the twist by $t\in\Z$ of the elliptic curve $E:y^2=x^3+ax+b$ with integer coefficients $a,b\in\Z$. Put $\Delta=\Delta(E)=2^{v_2(\Delta)}3^{v_3(\Delta)}\Delta_{(6)}$ the discriminant of $E$ that we suppose minimal.

The root number can be written as $$W(E_t)=-W_2(E_t)W_3(E_t)\prod_{p\mid t; p\nmid6\Delta}{W_p(E_t)}\prod_{p\mid\Delta_{(6)}}{W_p(E_t)}.$$
The places at $p\mid t$ such that $p\nmid\Delta$ have reduction type $I_0^*$ on $E_t$, and thus by Proposition \ref{Rohrlichroot} one has $W_p(E_t)=\left(\frac{-1}{p}\right)$.
$$W(E_t)=-W_2(E_t)W_3(E_t)\left(\frac{-1}{\vert t_{(6\Delta)}\vert}\right)\left(\prod_{p\mid\Delta_{(6)}}{W_p(E_t)}\right).$$

We have $\Delta(E_t)=\Delta t^6$. Since $t$ is assumed squarefree, the equation of $E_t$ is minimal unless $\gcd(\Delta,t)\not=1$.\\



\emph{Proof of the first property. }

For each prime $p$, Lemma \ref{signeinvariant} proves that there exists an integer $\alpha_p$ such that $W_p(E_t)=W_p(E_{t'})$ for all $t\equiv t'\mod p^{\alpha_p}$. Put $N=2^{\alpha_2}3^{\alpha_3}\prod_{p\mid\Delta_{(6)}}{p^{\alpha_p}}$. The global root number is the same for $t$ and $t'$, squarefree integers in the same congruence class modulo $N$.\\



\emph{Proof of the second property.} 

It is now left to show that this partition is not trivial, in other words that there exists at least one congruence class $[t_+]$ with root number $+1$ and one class $[t_-]$ with root number $-1$.

By Lemma \ref{brot}, we know that if $E$ does not satisfy one or more of the properties (a) to (d), then there exists such $t_+,t_-\in\N$. Suppose then that $E$ satisfies the properties (a) to (d). Then by Lemma \ref{brot}, $W(E_t)=W(E)$ for any squarefree positive $t\in\N$. We compare the twists $E_1(=E)$ and $E_{-1}$, then by Proposition \ref{variationquadratique} combined with the fact that $E$ has no multiplicative reduction, for $p\not=2,3$ we have $W_p(E_{-1})=W_p(E)$. The global root numbers are thus related as follows:
$$W(E_{-1})=-W_2(E_{-1})W(E_{-1})\prod_{p}{W_p(E_1)}$$
Observe that by assumption of property b), $W_2(E_{-1})=W_2(E)\cdot \mathrm{sgn}(-1)\left(\frac{-1}{\vert -1\vert}\right)=-W_2(E)$ (by convention, $\left(\frac{-1}{1}\right)=1$). Moreover by assumption of property c), $W_3(E_{-1})=(-1)^{v_3(-1)}W_3(E)=W_3(E)$. Thus
$$W(E_{-1})=-W_2(E)W_3(E)\prod_{p\not=2,3}{W_p(E)}=W(E).$$

\end{proof}

\begin{proof}[Proof of Lemma \ref{brot}.]

By Theorem \ref{thmquadratic}'s first property, the root number is periodic on the squarefree integers. In other words there exists a positive integer $N$ such that for $t\equiv t'\mod N$ we have $W(E_t)=W(E_{t'})$. Hence in particular any $t\equiv 1\mod N$ will have the same global root number as $W(E)$.
We look for a congruence class $[t_-]\mod N$ such that $W(E_{t_-})=-W(E)$. 

Let $t$ be a non-zero squarefree integer.
For each prime $p$, let $D_p\in\{+1,-1\}$ be the integer depending on $t$ such that $W_p(E_{t})=D_p W_p(E)$.  In Proposition \ref{variationquadratique}, we already computed the values of $D_p$ (for $p\not=2,3$) according to the reduction of $E$ at $p$ and to whether $p$ divides $t$.

One has $$W(E_t)=-\prod_{p\mid \Delta t}{\left(D_pW_p(E)\right)}=\left(\prod_{p\mid \Delta t}{D_p}\right)W(E),$$
which we can split the following way:
$$W(E_t)=D_2D_3\left(\prod_{p\mid\Delta_{(6)}; p\nmid t}{D_p}\right)\left(\prod_{p\mid t}{D_p}\right)W(E)$$
$$=D_2D_3\left(\prod_{ p\nmid 6t \atop E\text{ has mult. red. at p}}{D_p}\right)\left(\prod_{ p\nmid 6t \atop E\text{ has add. red. at p}}{D_p}\right)\left(\prod_{p\mid t\atop E\text{ has type }I_0}{D_p}\right)\left(\prod_{p\mid t\atop E\text{ not $I_0$}}{D_p}\right)W(E)$$
and Proposition \ref{variationquadratique} gives
$$=D_2D_3\left(\prod_{ p\nmid 6t \atop E\text{ has mult. red. at p}}{\left(\frac{t}{p}\right)}\right)\left(\prod_{p\mid t\atop E\text{ has type }I_0}{\left(\frac{-1}{p}\right)}\right)\left(\prod_{p\mid t\atop E\text{not $I_0$}}{D_p}\right)W(E)$$
so that

 $$W(E_{t})=D_2D_3\Big(\frac{t}{\Delta_M}\Big)\left(\frac{-1}{\vert t_
{(6\Delta)}\vert}\right)\left(\prod_{p\mid t \atop \text{ $E$ has bad red.}}{D_{p_0}}\right)W(E_t),$$
where $\Delta_M$ is the product of the prime numbers $p\not=2,3$ at which the reduction of $E$ has multiplicative reduction. Let $t_{(6\Delta)'}$ be the integer such that 
$$t=\mathrm{sgn}(t)\cdot2^{v_2(t)}3^{v_3(t)}t_{(6\Delta)'}t_{(6\Delta)},$$ 
that is to say $t_{(6\Delta)'}=\prod_{p\mid\Delta_{(6)}}{p^{v_p(t)}}$. 
We can write 
$$\Big(\frac{-1}{\vert t_{(6\Delta)}\vert }\Big)=\mathrm{sgn}(t)\left(\frac{-1}{\vert t_{(2)}\vert}\right)(-1)^{v_3(t)}\left(\frac{-1}{\vert t_{(6\Delta)'}\vert}\right).$$ 
Let $$C_2=\mathrm{sgn}(t)\cdot D_2\left(\frac{-1}{\vert t_{(2)}\vert}\right)\text{, }C_3=D_3(-1)^{v_3(t)}\text{, }C_M=\Big(\frac{t}{\Delta_M}\Big)\text{, }C_0=\prod_{p\mid t\atop\text{ $E$ has add. red.}}{D_{p}}\text{, }
C_{\Delta}=\left(\frac{-1}{t_{(6\Delta)'}}\right)$$
 so that
 $$C=C_2C_3C_MC_0C_{\Delta},$$ is the integer depending on $t$ such that $W(E_{ t})=C\cdot W(E)$. 
 In the remainder of the proof, we study in more detail the variation of each component in dependance of $t$.




Observe that it is always possible to find $t$ such that $C_2=+1$ (or $C_3=+1$), a trivial example being $t=1$.
We now look at multiple ways to obtain $C=-1$ for a certain positive integer $t$, depending on the properties of $E$.

(a) Suppose $E$ admits places of multiplicative reduction, i.e. $\Delta_M\not=1$. Let $t_0$ be a prime number with the following properties:
\begin{itemize} \item it does not divide $\Delta_{(6)}$, (so that $C_\Delta=C_0=+1$)\item it is not a square modulo $\Delta_M$ (so that $C_M=-1$) \item and it is such that $C_2=+1$ and $C_3=+1$.\end{itemize} Such a prime exists by the Chinese Remainder Theorem. For this $p_0$, we have $$W(E_{p_0})=-W(E).$$

However, in case $\Delta_M=1$, this construction is not possible, since one has $C_M=+1$ for any squarefree integer $t$.  Suppose thus that $E$ has no multiplicative reduction at any $p\not=2,3$ and let us look at what we can do instead.

%
Suppose that there exists a positive $t_0\nmid\Delta_{(6)}$ such that for the twist $E_{t_0}$ one has $C_2=-1$, then by the Chinese Remainder Theorem we can choose a positive $t \nmid\Delta_{6}$ (in addition we may assume that it is squarefree) respecting $t\equiv t_0\mod2^{\alpha_2}$ and such that $C_3=C_M=C_0=+1$. For this $t$, one has $W(E_t)=-W(E).$
In a similar way, if there exists a positive $t_0'\nmid\Delta$ such that $C_3=-1$, then one can find a $t'\in\N$ such that $W(E_t')=-W(E)$.

Thus it is necessary for the triple $(v_p(c_4),v_p(c_6),v_p(\Delta))$ to be among those listed in Lemma \ref{signe2special} when $p=2$ and among those listed in the Lemma \ref{signe3special} when $p=3$, so that they have the property that $C_2=+1$ and $C_3=+1$ for any squarefree $t\in\N$.

(d)
In the following, we will assume that $C_2=C_3=C_M=+1$ for any quadratic twist of $E$. Let us look at $E_{p_0}$ the twist at a prime $p_0\not=2,3$ that divides $\Delta$.
According to the reduction at $p_0$, we have different a value of $C_0=D_{p_0}$ given by Proposition \ref{variationquadratique}. As for $C_\Delta$, it is equal to $\left(\frac{-1}{p_0}\right)$. 
\begin{enumerate}[i.]
\item If the reduction is of type $III$ or $III^*$, then $D_{p_0}=+1$ (i.e. $W_{p_0}(E_{t})=W_{p_0}(E)$ if $p_0\mid t$), so for $E_{p_0}$ we have $C_0=+1$ and $C_\Delta=\left(\frac{-1}{p_0}\right)$. Thus $W(E_{p_0})=\left(\frac{-1}{p_0}\right)W(E)$ for all $t\in\Z\setminus\{0\}$. If $p_0\equiv3\mod4$, then $W(E_{p_0})=-W(E)$, otherwise $W(E_{p_0})=W(E)$.

\item If the reduction has type $II$, $II^*$, $IV$ or $IV^*$ then $D_{p_0}=\left(\frac{3}{p_0}\right)$, and thus $W(E_{p_0})=\left(\frac{3}{p_0}\right)\left(\frac{-1}{p_0}\right)W(E_t)=\left(\frac{-3}{p_0}\right)W(E_t)$ for all $t\in\Z\setminus\{0\}$. If $p_0\equiv5\mod6$, then $W(E_{p_0})=-W(E)$, otherwise if $p_0\equiv1\mod6$, $W(E_{p_0})=W(E)$. 

\item If the reduction has type  
$I_m^*$, then 
in the summerizing table of Section \ref{monodromiefacile} we find that $D_{p_0}=-\left(\frac{-6b_{(p_0)}}{p_0}\right)$. 
Put $r\in\{\pm1\}$ to be the remainder modulo $4$ of $p_0$. Then take $t$ such that $t\equiv -6b_{(p_0)}r \mod p_0$ and prime to $6\Delta$. 
We have
\begin{align*}
W(E_{p_0})&=\left(\frac{-1}{p_0}\right)D_0W(E),& \text{since }C_2=C_3=C_M=+1\nonumber\\
&=\left(\frac{-1}{p_0}\right)\left(\frac{-6b_{(p_0)}t}{p_0}\right)W(E)=\left(\frac{-r}{p_0}\right)\left(\frac{(-6b_{(p_0)})^2}{p_0}\right)W(E)&\nonumber\\
&=-W(E).&\\
\end{align*}
\item If the reduction has type $I_0^*$, then $D_{p_0}=\left(\frac{-1}{p_0}\right)$, and thus $W(E_{p_0})=W(E)$ in any case.
\end{enumerate}

Observe that if we take, rather than a prime, a (positive) squarefree product $t_0=\prod{p_0}$ of primes $p_0$ dividing $\Delta_{(6)}$, then $C_0=\prod_{p_0}{D_{p_0}}$ and $C_{\Delta}=\left(\frac{-1}{\prod{p_0}}\right)$. So if each of the condition described in i. ii.  iii. and iv. hold on the corresponding factor of $t_0$, the root number is $W(E_{t_0})=W(E)$. 

Suppose that 
the hypotheses a), b), c) and d) in the statement are verified. Let $t$ be a positive integer. Then by hypotheses a), $C_M=+1$, by b) and c), $w_2=w_3=+1$, and by d), the contributions of a $p_0$ to $C_0$ and $C_\Delta$ (those are respectively equal to $D_{p_0}$ and $\left(\frac{-1}{p_0}\right)$) are compensating each other: $C_0\cdot C_\Delta=+1$. Thus $W(E_t)=W(E)$.
This prove Lemma \ref{brot}.
\end{proof}

As a corollary of the proofs of Theorem \ref{thmquadratic} and of Lemma \ref{brot}, we have the following decomposition that become handy in the computation of examples (see Examples \ref{ex1} to \ref{ex3}):

\begin{coro}\label{decompositiontwist}
Let $E$ be an elliptic curve, and $E_t$ the quadratic twist by $t\in\mathscr{F_2}$. Then the root number can be written as the following product:
$$W(E_{t})=-\prod_{p=2,3, \text{ $p$ bad reduction}}{w_p(t)},$$
where $w_p(t)$ are "local contributions" at each $p$ determined as follows:
$$w_p(t)=\begin{cases}\mathrm{sgn}(t)D_2\left(\frac{-1}{\mid t_{(2)}\mid}\right)W_2(E)& \text{for }p=2\\ (-1)^{v_3(t)}D_3W_3(E)&\text{for $p=3$}\\ D_p\left(\frac{-1}{p}\right)^{v_p(t)}W_p(E)&\text{for $p$($\not=2,3$) of additive reduction}\\ 
\left(\frac{t_{(173)}}{173}\right)D_pW_p(E)&\text{for $p$($\not=2,3$) of multiplicative reduction,}
\end{cases}$$ 
where $D_p$ are the integers such that $W_p(E_t)=D_pW_p(E)$ (they are given by Proposition \ref{variationquadratique} for $p\not=2,3$).

\end{coro}



\subsection{Families of sextic twists}

For any $t\in\Z\setminus\{0\}$, put the curve $E_t:y^2=x^3+t$.

\begin{thm} \label{thmA}

Let $t\in\mathscr{F}_6$ be a sixth-powerfree integer.
Then the root number can be written as
\begin{displaymath}
W(E_t)=-W_2(E_t)W_3(E_t)\left(\frac{-1}{\vert t_{(6)}\vert}\right)\left(\frac{sq(t)_{(6)}}{3}\right).
\end{displaymath}

Moreover, it is $(2^63^6,3)$-square periodic and
there exists square-congruence classes of $t$ such that $W(E_t)=+1$, and another such that $W(E_t)=-1$.
\end{thm}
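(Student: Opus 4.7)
The three claims are addressed in sequence. For the formula, write
$$W(E_t)=W_\infty(E_t)\,W_2(E_t)\,W_3(E_t)\prod_{p\geq 5}W_p(E_t)$$
with $W_\infty(E_t)=-1$. For each prime $p\geq 5$ dividing $t_{(6)}$, the sextic monodromy lemma of Section \ref{monodromiefacile} identifies the Kodaira type of $E_t$ at $p$ from $v_p(t)\bmod 6$, and Proposition \ref{Rohrlichroot} then gives $W_p(E_t)=\left(\frac{-1}{p}\right)$ for odd $v_p(t)\in\{1,3,5\}$ (types $II, I_0^*, II^*$) and $W_p(E_t)=\left(\frac{-3}{p}\right)$ for even $v_p(t)\in\{2,4\}$ (types $IV, IV^*$); primes with $v_p(t)=0$ contribute trivially. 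Multiplicativity $\left(\frac{-1}{p^n}\right)=\left(\frac{-1}{p}\right)^n$ (trivial for $n$ even) packages the odd-valuation contributions into $\left(\frac{-1}{|t_{(6)}|}\right)$, while quadratic reciprocity yields $\left(\frac{-3}{p}\right)=\left(\frac{p}{3}\right)$ for $p\neq 3$, converting the even-valuation product into $\left(\frac{sq(t)_{(6)}}{3}\right)$. Assembling gives the identity.

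For square-periodicity, inspect each factor: Lemmas \ref{j0p2} and \ref{j0p3} give $W_2$ and $W_3$ their claimed periodicities modulo $2^6$ and $3^6$ on $\mathscr{F}_6$; the Jacobi symbol $\left(\frac{-1}{|t_{(6)}|}\right)$ depends only on $|t_{(6)}|\bmod 4$, which is determined by $t\bmod 2^6 3^6$ once the sixth-powerfree condition fixes $v_2(t), v_3(t) < 6$; and $\left(\frac{sq(t)_{(6)}}{3}\right)$ depends only on $sq(t)\bmod 3$ since removing a possible factor of $3$ from $sq(t)$ does not change its class modulo $3$. Gluing these congruences yields the $(2^6 3^6, 3)$ square-periodicity.

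For non-constancy, the plan is to flip the last factor in isolation: choose a prime $p\geq 5$ coprime to $6$ with $p\equiv 2\pmod 3$, so that $\left(\frac{p}{3}\right)=-1$, and compare $t=1$ with $t=p^2$. Both are sixth-powerfree; $W_2$, $W_3$, and $\left(\frac{-1}{|t_{(6)}|}\right)$ are unchanged between the two (the last since $\left(\frac{-1}{p^2}\right)=+1$), while $\left(\frac{sq(t)_{(6)}}{3}\right)$ flips from $+1$ to $-1$, so $W(E_{p^2})=-W(E_1)$. The main obstacle is the careful tracking of edge cases in the periodicity modulus, particularly verifying at $v_2(t)=5$ that $|t_{(6)}|\bmod 4$ is indeed recovered from $t\bmod 2^6$, and that Lemma \ref{VA}'s raw case tables consolidate correctly into the compact statements of Lemmas \ref{j0p2} and \ref{j0p3}; translating between these two forms is where routine bookkeeping is most likely to introduce errors.
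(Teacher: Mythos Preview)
Your derivation of the formula is correct and matches the paper's proof: both decompose the product $\prod_{p\mid t,\ p\nmid 6}W_p(E_t)$ according to the parity of $v_p(t)$, invoke Rohrlich's local formula to get $\left(\frac{-1}{p}\right)$ on the odd-valuation factor and $\left(\frac{-3}{p}\right)$ on the even-valuation factor, and repackage via multiplicativity and $\left(\frac{-3}{p}\right)=\left(\frac{p}{3}\right)$. The square-periodicity argument is also the paper's, and you are right to flag the $v_2(t)=5$ edge case; the paper itself records the modulus as $2^73^7$ in the proof despite the $2^63^6$ in the statement.

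There is, however, a genuine gap in your non-constancy argument. You assert that $W_3(E_{p^2})=W_3(E_1)$ whenever $p\geq 5$ with $p\equiv 2\pmod 3$, but Lemma~\ref{VA} shows $W_3$ at $v_3(t)=0$ depends on $t_3\bmod 9$, not $\bmod 3$: one has $W_3(E_1)=+1$ since $1\not\equiv 5,7\pmod 9$, yet for $p=5$ we get $p^2=25\equiv 7\pmod 9$ and hence $W_3(E_{25})=-1$. In that case both $W_3$ and $\left(\frac{sq(t)_{(6)}}{3}\right)$ flip, and the global root number is \emph{unchanged}: $W(E_{25})=W(E_1)$. The fix is immediate---impose in addition $p^2\equiv 1\pmod 9$ (e.g.\ $p\equiv 8\pmod 9$, so $p=17$ works)---but as written the argument is incomplete. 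The paper avoids this by holding the entire residue of $t$ modulo $2^73^7$ fixed while changing only the class of $\tau_2$ modulo $3$, which automatically freezes $W_2$ and $W_3$.
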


\begin{proof}

Let $E_t:y^2=x^3+t$ be a family of elliptic curve parametrized by the variable $t$.

Suppose $t$ sixth-powerfree. The root number can be written as
\begin{displaymath}
W(E_t)=-W_2(E_t)W_3(E_t)\prod_{p\vert t\text{, }p\not=2,3}{W_p(E_t)}.
\end{displaymath}
As previously, we simplify the notations by writting $W_p(t)$ rather that $W_p(E_t)$. The formulae for the local root number at $2$ and $3$ are given by \cite[Lemme 4.1]{VA}. 
The local root number at $2$ stays the same on a congruence class modulo $2^6$ as proven in Lemma \ref{j0p2} and the local root number at $3$ stays the same on a congruence class modulo $3^6$ as proven in \ref{j0p3}.

Now look at the product in the root number formula, i.e. the quantity $$\mathscr{P}(t):=\prod_{p\vert t\text{, }p\not=2,3}{W_p(E_t)}.$$

By \cite[Prop. 2]{Rohr} (reported in Proposition \ref{Rohrlichroot}), we have
\begin{displaymath}
\mathscr{P}(t)=\prod_{p\vert t, p\not=2,3}
\begin{cases}
 +1 & \text{si }v_p(t)\equiv0\mod6; \\
 \big(\frac{-1}{p}\big) & \text{si } v_p(t)\equiv1,3,5\mod6; \\
 \big(\frac{-3}{p}\big) & \text{si } v_p(t)\equiv2,4\mod6.
\end{cases}
\end{displaymath}

Since $t$ is assumed sixth-powerfree, we can write it as $t=2^{\alpha}3^{\beta}t_1t_2^2t_3^3t_4^4t_5^5$ where $t_i$ are pairwise prime and neither divisible by $2$ nor $3$. Note that $\vert t_{(6)}\vert =t_1t_2^2t_3^3t_4^4t_5^5$ and $\mathrm{sq}(t)_{(6)}=t_2t_4$.

Then we split $\mathscr{P}(t)$ in two parts, according to whether $p\vert\tau_1$ or $p\vert \tau_2$.

\begin{displaymath}
\mathscr{P}(t)=\prod_{p\vert \tau_1}{W_p(E_t)}\prod_{p\vert \tau_2}{W_p(E_t)}
\end{displaymath}

\begin{displaymath}
=\prod_{p\vert \tau_1}{\left(\frac{-1}{p}\right)}\prod_{p\vert \tau_2}{\left(\frac{-3}{p}\right)}
\end{displaymath}

\begin{displaymath}
=\left(\frac{-1}{\tau_1}\right)\left(\frac{-3}{\tau_2}\right)
=\left(\frac{-1}{\tau_1}\right)\left(\frac{\tau_2}{3}\right)
\end{displaymath}

We obtain that the root number is
\begin{displaymath}
W(E_t)=-W_2(E_t)W_3(E_t)\left(\frac{-1}{\tau_1}\right)\left(\frac{\tau_2}{3}\right).
\end{displaymath}

The root number depends of the remainders of $t$ modulo $2^73^7$ and of $\tau_2$ modulo 3. Note that it is possible to find a pair of congruence classes $[t_+]$ and $[t_-]$ such that $W(E_{t_+}=-W(E_{t_-})$. Indeed, if we take $t_+\equiv C\mod 2^73^7$, we can (by replacing $\tau_2$ by an integer $\tau_2'$ prime to $6\tau_1$ and such that $\tau_2'\equiv-\tau_2\mod 3$) define $t_-=2^\alpha3^\beta t_1\tau_2'$ whose root number is opposite to the root number of $t_+$.
\end{proof}


\begin{rem}
In \cite{VA}, V\'arilly-Alvarado proved that the root number varies on a surface of the form $y^2=x^3+AT^6+B$ except possibly if $3A/B$ is a rational square. In \cite{Desjardins2} the author explicit the conditions on $A$ and $B$ for the elliptic surfaces on which the root number is constant along the fibration. In particular, we give here an example that was previously found by V\'arilly-Alvarado \cite{VA}:
\begin{equation}\label{s4}
y^2=x^3+27t^6+16.
\end{equation}


Consider the elliptic surface given by (\ref{s4}). Every fibre $E_t$ at $t=\frac{m}{n}\in\Q$ is $\Q$-isomorphic to the curve:
$$E_{m,n}:y^2=x^3+27m^6+16n^6.$$
Remark that $-3\equiv(4m^3/3n^3)^2\mod p$ and thus $\Big(\frac{-3}{p}\Big)=1$ for all $p\vert f(t)$. The formula becomes 
\begin{displaymath}
W(E_t)=-W_2(E_t)W_3(E_t)\left(\frac{-1}{(27m^6+16n^6)_{(2)}}\right).
\end{displaymath}

Moreover, for every value of $t$, the remainder modulo $3$ of $27m^6+16n^6$ is $1$. The minimal value of $M_2$ stays $3$.
It is possible to find a value of $M_1$ lower that the one given by the theorem (which is $M_1=2^73^7$). This new value is $M_1'=2^43^0=16$. 
Observe that $27m^6+16n^6$ falls in the congruence classes $1,11\mod 16$. These values correspond to a root number $+1$. However, the generic rank can be proven to be equal to $2$ on this elliptic surface, so the set of rational points is Zariski-dense.
\end{rem}

\subsection{Families of quartic twists}

For every $t\in \Z\setminus\{0\}$, let $E$ be the curve  $E_t:y^2=x^3+tx$.

\begin{thm}\label{thmB}

Let $t\in\mathscr{F}_4$ be a fourth-powerfree integer.
Then the root number $E_t:y^2=x^3$ can be written as
\begin{displaymath}
W(E_t)=-W_2(E_t)W_3(E_t)\left(\frac{-2}{\vert t_{(6)}\vert}\right)\left(\frac{-1}{sq(t)_{(6)}}\right).
\end{displaymath}

Moreover, the function $t\mapsto W(E_t)$ is $(2^4,4)$-square periodic and
there exists classes for which $W(E_t)=+1$, and other such that $W(E_t)=-1$.
\end{thm}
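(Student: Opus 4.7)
The plan is to parallel the argument of Theorem \ref{thmA}, replacing sextic with quartic arithmetic throughout. The starting identity is
$$W(E_t)=-W_2(E_t)W_3(E_t)\prod_{p\mid t,\,p\neq2,3}W_p(E_t),$$
which holds because the discriminant $-64t^3$ of $E_t:y^2=x^3+tx$ is supported on $\{2\}\cup\{p:p\mid t\}$, so $W_p(E_t)=1$ at every other prime by Proposition \ref{Rohrlichroot}.

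To evaluate the finite product, I would invoke the monodromy lemma for quartic twists of Section \ref{monodromiefacile}: for $p\nmid 6$ with $p\mid t$, the reduction type of $E_t$ at $p$ is $III$, $I_0^*$, or $III^*$ according as $v_p(t)\equiv 1,2,3\pmod 4$. Proposition \ref{Rohrlichroot} then gives $W_p(E_t)=(-2/p)$ in the odd cases and $W_p(E_t)=(-1/p)$ in the middle case. Writing any $4$-th powerfree $t$ as $t=\pm 2^{\alpha}3^{\beta}t_1t_2^{2}t_3^{3}$, where $t_1,t_2,t_3$ are pairwise coprime squarefree integers coprime to $6$ collecting the primes with $v_p(t)\equiv 1,2,3\pmod 4$ respectively, the product factors as $(-2/t_1)(-1/t_2)(-2/t_3)$. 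Since $\vert t_{(6)}\vert=t_1t_2^{2}t_3^{3}$, multiplicativity of the Jacobi symbol together with $(-2/t_2^{2})=1$ and $(-2/t_3^{3})=(-2/t_3)$ yields $(-2/\vert t_{(6)}\vert)=(-2/t_1)(-2/t_3)$, while $sq(t)_{(6)}=t_2$ gives $(-1/sq(t)_{(6)})=(-1/t_2)$. Multiplying recovers the claimed formula.

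For the square-periodicity, I would combine the individual periodicities: Lemma \ref{j1728p2} gives that $W_2$ depends only on $t\bmod 2^4$ on $\mathscr{F}_4$, Lemma \ref{j1728p3} gives the analogous control over $W_3$, and the Jacobi symbols $(-2/\vert t_{(6)}\vert)$ and $(-1/sq(t)_{(6)})$ depend respectively only on $\vert t_{(6)}\vert\bmod 8$ and on $sq(t)_{(6)}\bmod 4$, both of which are determined by the pair $(t\bmod N,\,sq(t)\bmod 4)$ for an appropriate $N$ absorbing the $2$- and $3$-adic moduli. Non-constancy would then follow by producing two explicit representatives: taking $t=1$ and $t=\ell$ for a suitably chosen prime $\ell\equiv 3\pmod 4$ coprime to $6$, only the factor $(-2/\vert t_{(6)}\vert)$ flips sign, so the two global root numbers are opposite. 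The step most likely to require care is the bookkeeping of sign conventions, since $\vert t_{(6)}\vert$ forces an absolute value while $sq(t)_{(6)}$ is automatically positive; combined with the possibility $t<0$, it is in the multiplicativity step relating $(-2/t_1)(-2/t_3)$ to $(-2/\vert t_{(6)}\vert)$ that a minor sign error is most likely to creep in.
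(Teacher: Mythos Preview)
Your proposal is correct and follows essentially the same approach as the paper: both start from the product decomposition $W(E_t)=-W_2W_3\prod_{p\mid t,\,p\nmid 6}W_p$, apply Rohrlich's local formula via the quartic monodromy lemma to get $(-2/p)$ or $(-1/p)$ according to $v_p(t)\bmod 4$, factor $t=\pm 2^\alpha 3^\beta t_1 t_2^2 t_3^3$, and collapse the product by Jacobi-symbol multiplicativity into $\bigl(\tfrac{-2}{|t_{(6)}|}\bigr)\bigl(\tfrac{-1}{sq(t)_{(6)}}\bigr)$.

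One small point on the non-constancy step: your ``suitably chosen $\ell\equiv 3\pmod 4$'' needs to be pinned down further. For $\bigl(\tfrac{-2}{\ell}\bigr)=-1$ you need $\ell\equiv 7\pmod 8$, and to keep $W_2$ unchanged from $t=1$ you need (by Lemma \ref{VAx}) $\ell\equiv 15\pmod{16}$; then $W_3$ is automatically fixed since $v_3$ is unchanged. So take $\ell\equiv 15\pmod{16}$ coprime to $3$. The paper's own proof does not actually spell out the non-constancy, so your sketch here is already an addition rather than an omission.
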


\begin{proof}

Let $t\in\mathscr{F}_4$ and define the elliptic curve $E_t:y^2=x^3+tx$. The root number of $E_t$ can be written as
\begin{displaymath}
W(E_t)=-W_2(E_t)W_3(E_t)\prod_{p\vert t\text{, }p\not=2,3}{W_p(E_t)}.
\end{displaymath}
By Lemmas \ref{j1728p2} and \ref{j1728p3}, the local root number at $2$ and $3$ are periodic.
Look now at the remaining part of the root number, namely
\begin{equation}\label{produitpdivt}
\prod_{p\vert t, p\not=2,3}{W_p(E_t)}.
\end{equation}

By \cite[Prop. 2]{Rohr}, we have
\begin{displaymath}
\prod_{p\vert t, p\not=2,3}{W_p(E_t)}=\prod_{p\vert t, p\not=2,3}
\begin{cases}
 +1 & \text{if }v_p(t)\equiv0\mod4; \\
 \big(\frac{-1}{p}\big) & \text{si } v_p(t)\equiv2\mod4 \\
 \big(\frac{-2}{p}\big) & \text{si } v_p(t)\equiv1,3\mod4.
\end{cases}
\end{displaymath}

Since $t$ is assumed fourth-powerfree, we can write it as $t=2^{\alpha}3^{\beta}t_1t_2^2t_3^3$, where $t_i$ are pairwise coprime and not divisible by $2$ nor $3$. Note that $\vert t_{(6)}\vert=t_1t_2^2t_3^3$ and that $\mathrm{sq}(t)=t_2$.
We split (\ref{produitpdivt}) in two parts according to whether $p\mid t_1 t_3$ or $p\vert t_2$.

\begin{align*}
\prod_{p\vert t}{W_p(E_t)}&=\prod_{p\vert t_1 t_3}{W_p(E_t)}\prod_{p\vert t_2}{W_p(E_t)}\nonumber \\
&=\prod_{p\vert \tau_1}{\left(\frac{-2}{p}\right)}\prod_{p\vert t_2}{\left(\frac{-1}{p}\right)}\nonumber\\
&=\Big(\frac{-2}{t_1t_3}\Big)\Big(\frac{-1}{t_2}\Big).\nonumber\\
&=\Big(\frac{-2}{\vert t_{(6)}\vert}\Big)\Big(\frac{-1}{\mathrm{sq}(t)_{(6)}}\Big).\nonumber\\
\end{align*}
This implies that the root number can be written as
\begin{displaymath}
W(E_t)=-W_2(E_t)W_3(E_t)\left(\frac{-2}{\vert t_{(6)}\vert}\right)\left(\frac{-1}{\mathrm{sq}(t)_{(6)}}\right).
\end{displaymath}

Its value depends on the remainder of $\tau_1$ modulo $2^63^4$ and $t_2$ modulo 4.

\end{proof}


\begin{rem}
In \cite{CS}, Cassels and Schinzel show that the elliptic surfaces given by the equations
\begin{equation}\label{CS1}
y^2=x(x^2-(1+t^4)^2)
\end{equation}
and
\begin{equation}\label{CS7}
y^2=x(x^2-49(1+t^4)^2)
\end{equation}
are such that every fibre at $t\in\mathbb{Q}$ have the same root number. On (\ref{CS1}), the root number is always $+1$ and on (\ref{CS7}), always $-1$.

Remark that it is possible to find a $N_1$ lower than the one given by Theorem \ref{thmB} such that the function $t\rightarrow W(E_{f(t)})$ is $N_1$-periodic. This value is $N_1=2^43^2=144$.

The only remainder values of $-(1+t^4)^2\mod16$ are $-1$ and $-4$, and the only remainder value modulo $9$ is $-1$. By the Chinese Remainder Theorem, we find a small set of possible remainders modulo $144$ for $-(1+t^4)^2$, and the associated root number on the congruence class they define is $+1$. 
It it is not possible to conclude anything on the density of the rational points from the root number of the fibre of surface (\ref{CS1}). 
On the surface (\ref{CS7}), the root number is $-1$ for every possible remainder of $-7^2(1+t^4)^2$. Assuming the parity conjecture, the rational points of the surface (\ref{CS7}) are dense.

In \cite{Zhizhong} Huang presents a geometric method that proves the density of rational points for elliptic surfaces defined by the equation $y^2=x^3-d^2(1+t^4)^2x$, for infinitely many squarefree values of $d$ including $d=1$ and $7$.
\end{rem}

\subsection*{Acknowledgment}

This article is based on results obtained by the author in her PhD thesis \cite{Desjardinsthese}. She is grateful to Marc Hindry for his excellent supervision. She thanks David Rohrlich for carefully reading a previous version of this work. She also wishes to thank Samuele Anni, Vladimir Dokchitser, Anthony V\'arilly-Alvarado and Adam Morgan for helpful conversations, and the anonymous referee for useful suggestions. The author was supported by the Max Planck Institut f\"ur Mathematik in Bonn.

\bibliographystyle{alpha}
\bibliography{bibliothese}

\end{otherlanguage}
\end{document}